\DeclareMathOperator{\Div}{div}
\DeclareMathOperator{\Curl}{curl}
\DeclareMathOperator{\Grad}{grad}
\DeclareMathOperator{\tr}{tr}
\newcommand{\R}{\mathbb{R}}
\newcommand{\N}{\mathbb{N}}
\newcommand{\Poly}[1]{\mathcal{P}^{#1}}
\newcommand{\testspace}{\hat{V}}
\newcommand{\trialspace}{V}
\newcommand{\dualtestspace}{\hat{V}^*}
\newcommand{\dualtrialspace}{V^*}
\newcommand{\EVh}{W_h}
\newcommand{\goal}{\mathcal{M}}
\newcommand{\tol}{\epsilon}
\newcommand{\facet}{S}
\newcommand{\bubble}{b}
\newcommand{\cone}{\beta}
\newcommand{\dx}{\,\mathrm{d}x}
\newcommand{\ds}{\,\mathrm{d}s}
\newcommand{\dolfin}{\textrm{DOLFIN}}
\newcommand{\fenics}{\textrm{FEniCS}}
\newcommand{\ufl}{\textrm{UFL}}
\newcommand{\ffc}{\textrm{FFC}}
\newcommand{\triang}{\mathcal{T}}
\newcommand{\foralls}{\forall \,}
\newcommand{\assumption}[1]{A#1}
\newcommand{\inner}[2]{\langle #1, #2 \rangle}
\newcommand{\average}[1]{[#1]}
\newcommand{\emp}[1]{\texttt{#1}}
\newtheorem{thm}{Theorem}[section]
\newtheorem{lem}[thm]{Lemma}
\numberwithin{equation}{section}
\title{Automated goal-oriented error control I: Stationary variational problems}
\author{Marie E. Rognes%
 \thanks{Center for Biomedical Computing at Simula Research
   Laboratory, P.O. Box 134, 1325 Lysaker, Norway (meg@simula.no,
   logg@simula.no). This work is supported by an Outstanding Young
   Investigator grant from the Research Council of Norway, NFR
   180450. This work is also supported by a Center of Excellence grant
   from the Research Council of Norway to the Center for Biomedical
   Computing at Simula Research Laboratory.}
\and Anders Logg\footnotemark[1]
\thanks{Department of Informatics, University of Oslo, Norway.}}
\begin{document}

\maketitle

\renewcommand{\thefootnote}{\arabic{footnote}}


\begin{abstract}
  This article presents a general and novel approach to the automation
  of goal-oriented error control in the solution of nonlinear
  stationary finite element variational problems. The approach is
  based on automated linearization to obtain the linearized dual
  problem, automated derivation and evaluation of a~posteriori error
  estimates, and automated adaptive mesh refinement to control the
  error in a given goal functional to within a given
  tolerance. Numerical examples representing a variety of different
  discretizations of linear and nonlinear partial differential
  equations are presented, including Poisson's equation, a mixed
  formulation of linear elasticity, and the incompressible
  Navier--Stokes equations.
\end{abstract}

\begin{keywords}
  finite element method, a posteriori, error control, dual problem,
  adaptivity, automation, nonlinear
\end{keywords}

\begin{AMS}
65N30, 68N30
\end{AMS}

\section{Introduction}

For any numerical method, it is of critical importance that the
accuracy of computed solutions may be assessed. For the numerical
solution of differential equations, the accuracy is typically assessed
manually by computing a sequence of solutions using successive
refinement until it is judged that the solution has
``converged''. This approach is unreliable as well as
time-consuming. It may also be impossible, since computing resources
may be exhausted long before convergence has been reached.

For finite element discretizations, classic \emph{a~posteriori} error
analysis provides a framework for controlling the approximation error
measured in some Sobolev norm,
cf.~\cite{ainsworth_posteriori_2000}. Over the last two decades,
\emph{goal-oriented} error control has been developed as an extension
of the classic \emph{a~posteriori}
analysis~\cite{becker_optimal_2001,eriksson_introduction_1995}. The
problem of goal-oriented error control for stationary variational
problems can be posed as follows. Consider the following canonical
variational problem: find $u \in \trialspace$ such that
\begin{equation}
  \label{eq:primal}
  F(u; v) = 0 \quad \foralls v \in \testspace,
\end{equation}
where $F : \trialspace \times \testspace \rightarrow \R$ is a
semilinear form (linear in $v$) on a pair of trial and test
spaces~$(\trialspace, \testspace)$. A goal-oriented adaptive algorithm
seeks to find an approximate solution $u_h \approx u$
of~\eqref{eq:primal} such that
\begin{equation*}
  \eta = |\goal(u) - \goal(u_h)| \leq \tol,
\end{equation*}
where $\goal : \trialspace \rightarrow \R$ is a given goal functional,
$\tol > 0$ is a given tolerance, and $\eta$ is here defined as the
error in the given goal functional. In other words, goal-oriented
error control allows the construction of an adaptive algorithm that
targets a simulation to the efficient computation of a specific
quantity of interest.

The framework developed
in~\cite{becker_optimal_2001,eriksson_introduction_1995} provides a
general method for deriving an \emph{a~posteriori} estimate of the
error and adaptive refinement indicators, based on the solution of an
auxiliary linearized adjoint (dual) problem. This framework is
directly applicable to a large class of finite element variational
problems. However, a certain level of expertise is required to derive
the error estimate for a particular problem and to implement the
corresponding adaptive solver. In particular, the derivation of the
dual problem involves the linearization of a possibly complicated
nonlinear problem. Furthermore, both the derivation and evaluation of
the \emph{a~posteriori} error estimate remain nontrivial (at least in
practice). Moreover, each derivation must be carried out on a
problem-by-problem basis. As a result, goal-oriented error control
remains a tool for experts and usually requires a substantial effort
to implement.

In this work, we seek to automate goal-oriented error control. We
present a fully automatic approach for computation of error estimates
and adaptive refinement; that is, without any manual analysis,
preparation, or intervention. To our knowledge, no such automation has
previously been presented, and in particular not realized. The
strategy presented here requires a minimal amount of input and expert
knowledge; the only input required by our adaptive algorithm is the
semilinear form $F$, the functional~$\mathcal{M}$, and the
tolerance~$\tol$. Based on the given input, the adaptive algorithm
automatically generates the dual problem, the \emph{a~posteriori}
error estimate, and attempts to compute an approximate solution~$u_h$
that meets the given tolerance for the given functional. In
particular, problem-tuned error estimates and indicators are generated
without any manual derivations. This has the potential of rendering
state-of-the-art goal-oriented error control fully accessible to
non-experts at no additional implementational cost.

We emphasize that although one may, in principle, manually carry out
the necessary analysis and implementation in any particular case,
automation plays an important role since it (i) makes expert knowledge
accessible to non-experts, (ii) speeds up development cycles, and
(iii) enables more complex problems to be tackled which would
otherwise require considerable effort to analyze and implement. A
similar and successful automation effort as part of the FEniCS
Project~\cite{LoggMardalEtAl2012a,kirby_fiat:new_2004,kirby_compiler_2006,logg_dolfin_2009}
has led to the development of methodology and tools that automate the
discretization of a large class of partial differential equations by
the finite element method.

In this paper, we limit the discussion to stationary variational
problems. The error estimates and indicators generated by the
automated algorithm can be viewed as a version of the
dual-weighted-residual estimates of~\cite{becker_optimal_2001}. We
emphasize that the main target of this paper is to present an
automation of a method for goal-oriented error control; in particular,
it is not our intention to improve the method theoretically nor in
detail examine the cases where the method is known to work
poorly. Also, the question of automated error control for
time-dependent problems will be considered in later works.

The remainder of this introduction describes the organization of the
paper. The first two sections, Sections~\ref{sec:notation}
and~\ref{sec:linear}, establish the abstract problem setting by
defining notation and summarizing the well-established goal-oriented
error estimation framework for linear variational problems. The linear
setting is discussed first for the sake of clarity and brevity: the
extension to the nonlinear case is summarized in
Section~\ref{sec:nonlinear}. The primary novel contributions of this
paper are contained in
Sections~\ref{sec:residual_representation},~\ref{sec:dual_approximation},
and \ref{sec:adaptivity}: the main result of this paper is an
automated strategy for the computation of error estimates and
indicators. This strategy relies on two key components: first, a
procedure, applicable to a general class of stationary variational
problems, for the derivation of a strong residual representation from
a weak residual representation
(Section~\ref{sec:residual_representation}). Second, the evaluation of
the error estimates relies on a dual approximation: an affordable
strategy for obtaining an improved dual approximation by extrapolation
for a general class of finite element spaces is described in
Section~\ref{sec:dual_approximation}.

Combining the goal-oriented error estimation framework with the
automated techniques introduced in
Section~\ref{sec:residual_representation}
and~\ref{sec:dual_approximation}, yields the automated adaptive
algorithm described in Section~\ref{sec:adaptivity}. A realization of
this automated algorithm has been implemented as part of the FEniCS
Project, and is, in particular, available through both the Python and
C++ versions of the DOLFIN library. A simple example of its use and
some aspects of the implementation are also discussed in this section.
In Section~\ref{sec:numerics}, we apply the presented framework to
three examples of varying complexity: the Poisson equation, a
three-field mixed formulation for the linear elasticity equations, and
the stationary Navier--Stokes equations. Finally, we conclude and
discuss further work in Section~\ref{sec:conclusion}.


\section{Notation}
\label{sec:notation}

Throughout this paper, $\Omega \subset \R^d$ denotes an open, bounded
domain with boundary $\partial \Omega$. We will generally assume that
$\Omega$ is polyhedral such that it can be exactly represented by an
admissible, simplicial tessellation $\triang_h$. The boundary will
typically be the union of two disjoint parts, denoted $\partial
\Omega_D$ and $\partial \Omega_N$.

In general, the notation $V(X; Y)$ is used to denote the space of
fields $X \rightarrow Y$ with regularity properties specified by
$V$. If $Y = \R$, this argument is omitted. For $L^2(K; \R^d)$; that
is, the space of $d$-vector fields on $K \subseteq \Omega$ in which
each component is square integrable, the inner product reads $\langle
\cdot, \cdot \rangle_{K}$, and the norm is denoted $||\cdot||_{K}$. If
$K = \Omega$, the subscript is omitted. For $m = 1, 2, \dots$,
$H^m(\Omega)$ denotes the space of square integrable functions with
$m$ square integrable distributional derivatives. Also, $H^1_{g,
  \Gamma} = \{ u \in H^1(\Omega): u|_{\Gamma} = g \}$. Similarly,
$H(\Div, \Omega)$ denotes the space of square integrable vector fields
with square integrable divergence. Note that both the gradient of a
vector field and the divergence of a matrix field are applied
row-wise.

A form $a: W_1 \times \cdots \times W_{n} \times V_{\rho} \times \cdots
\times V_1 \rightarrow \R$, written $a(w_1, \dots, w_n; v_{\rho},
\dots, v_1)$, is (possibly) nonlinear in all arguments preceding
the semi-colon, but linear in all arguments following the semi-colon.

\section{A framework for goal-oriented error control}
\label{sec:linear}

In this section, we present a general framework for goal-oriented
error control for conforming finite element discretizations of
stationary variational problems. The framework is a summary of the
paradigm developed
in~\cite{becker_optimal_2001,eriksson_introduction_1995}. For clarity,
we restrict our attention to linear variational problems and linear
goal functionals. Extensions to nonlinear problems and nonlinear goal
functionals are made in Section~\ref{sec:nonlinear}.

Let $\trialspace$ and $\testspace$ be Hilbert spaces of functions or
fields defined on a domain $\Omega \subset \R^d$ for $d = 1, 2, 3$. In
this section, we consider the following linear variational problem:
find $u \in \trialspace$ such that
\begin{equation}
  \label{eq:primal,linear}
  a(u, v) = L(v) \quad \foralls v \in \testspace.
\end{equation}
We assume that $a: \trialspace \times \testspace \rightarrow \R$ is a
continuous, bilinear form, and that $L: \testspace \rightarrow \R$ is
a continuous, linear form. We shall further assume that the problem is
well-posed; that is, there exists a unique solution $u$ that depends
continuously on any given data. The variational problem defined
by~\eqref{eq:primal,linear} will be referred to as the \emph{primal
  problem} and~$u$ will be referred to as the \emph{primal solution}.

Let $\triang_h$ be an admissible simplicial tessellation of $\Omega$
(to be determined) and assume that $\trialspace_h \subset \trialspace$
and $\testspace_h \subset \testspace$ are finite element spaces
defined relative to $\triang_h$. The finite element approximation
of~\eqref{eq:primal,linear} then reads: find $u_h \in \trialspace_h$
such that
\begin{equation}
  \label{eq:primal,linear,discrete}
  a(u_h, v) = L(v) \quad \foralls v \in \testspace_h.
\end{equation}
We assume that the spaces $\trialspace_h$ and $\testspace_h$ satisfy
an appropriate discrete inf--sup condition such that a unique discrete
solution exists. The problem~\eqref{eq:primal,linear,discrete} will be
referred to as the discrete primal problem and $u_h$ the discrete
primal solution.

We are interested in estimating the magnitude of the error in a given
goal functional $\goal: \trialspace \rightarrow \R$. Moreover, for a
given tolerance $\tol > 0$, we aim to find
$(\trialspace_h, \testspace_h)$ such that the corresponding finite
element approximation $u_h$, as defined
by~\eqref{eq:primal,linear,discrete}, satisfies
\begin{equation}
  \label{eq:zegoal}
  \eta \equiv |\goal(u) - \goal(u_h)| \leq \tol.
\end{equation}
In addition, we would like to compute the value of the goal functional
$\goal(u_h)$ efficiently, ideally using a minimal amount of work.

In order to estimate the magnitude of the error $\eta$, we first
define the (weak) residual relative to the approximation $u_h$,
\begin{equation}
  \label{eq:residual}
  r(v) = L(v) - a(u_h, v).
\end{equation}
Some remarks are in order. First, $r$ is a bounded, linear functional
by the continuity and linearity of $a$ and $L$. Second, as a
consequence of the Galerkin orthogonality implied by $\testspace_h
\subset \testspace$, the residual vanishes on $\testspace_h$. In other
words,
\begin{equation}
  \label{eq:galerkinorthogonality}
  r(v) = 0 \quad \foralls v \in \testspace_h.
\end{equation}

Next, we define the (weak) dual problem: find $z \in \dualtrialspace$
such that
\begin{equation}
  \label{eq:dual}
  a^{*}(z, v) = \goal(v) \quad \foralls v \in \dualtestspace,
\end{equation}
where $(\dualtrialspace, \dualtestspace)$ is the pair of dual trial
and test spaces, and $a^{*}$ denotes the adjoint of $a$; that is,
$a^{*}(v, w) = a(w, v)$. We shall assume that the dual
problem~\eqref{eq:dual} is well-posed, and that there thus exists a
\emph{dual solution} $z$ solving~\eqref{eq:dual} with continuous
dependence on the input data. Moreover, we assume that the dual trial
and test spaces are chosen such that $u - u_h \in \dualtestspace$ and
$z \in \testspace$. This holds if $\dualtestspace=\trialspace_0 = \{v
- w: v, w \in \trialspace\}$ and $\dualtrialspace=\testspace$.

Combining~\eqref{eq:dual},~\eqref{eq:residual}, and
\eqref{eq:primal,linear}, we find that
\begin{equation*}
  \goal(u) - \goal(u_h)
  = a^*(z, u - u_h)
  = a(u - u_h, z)
  = L(z) - a(u_h, z)
  \equiv r(z).
\end{equation*}
The error $\goal(u) - \goal(u_h)$ is thus equal to the (weak)
residual~$r$ evaluated at the dual solution~$z$. By the Galerkin
orthogonality~\eqref{eq:galerkinorthogonality}, we obtain the
following error representation:
\begin{equation} \label{eq:error_representation}
  \goal(u) - \goal(u_h) = r(z) = r(z - \pi_h z).
\end{equation}
Here, $\pi_h z \in \testspace_h$ is an arbitrary test space field,
typically an interpolant of the dual solution.

An identical error representation is obtained for nonlinear
variational problems and nonlinear goal functionals with a suitable
definition of the dual problem. We return to this issue in
Section~\ref{sec:nonlinear}. It follows that if one can compute (or
approximate) the solution of the dual problem, one may estimate the
size of the error by a direct evaluation of the residual. However,
some concerns remain that require special attention. First, the error
representation~\eqref{eq:error_representation} is not directly useful
as an error indicator. The derivation of an \emph{a~posteriori} error
estimate and corresponding error indicators from the error
representation has traditionally required manual analysis, typically
involving some form of integration by parts and a redistribution of
boundary terms (fluxes) over cell facets. Second,
for~\eqref{eq:error_representation} to give a useful estimate of the
size of the error, care must be taken when solving the dual
problem~\eqref{eq:dual}. In particular, the error representation
evaluates to zero if the dual solution is approximated
in~$\testspace_h$. Finally, the derivation of the dual problem may
involve the differentiation of a nonlinear variational form. We
discuss how each of these issues can be automated in the subsequent
sections.

\section{Automated derivation of error estimates and error indicators}
\label{sec:residual_representation}

This section presents a novel approach to the derivation of an
\emph{a~posteriori} error estimate and corresponding error indicators for a
general class of stationary variational problems. The starting point
is the general abstract error
representation~\eqref{eq:error_representation}. The proposed generic
representation for the estimate and indicators is motivated and
introduced in Section~\ref{subsec:generic_representation}, followed by
a strategy for computing this representation in
Sections~\ref{subsec:automated_computation}--\ref{subsec:solvability}. The
key idea is the computation of a problem-tuned strong residual
representation using only ingredients from the general abstract form;
this concept constitutes one of the pillars for the complete automated
strategy. We emphasize that the resulting error estimate coincides
with classical duality-based error estimates that may be derived
manually (using integration by parts) for standard problems such as
the Poisson problem. However, we present here an approach that allows
these error estimates to be generated and evaluated automatically. We
also note that the automatically derived error indicators differ from
the standard duality-based error estimates resulting from an
integration by parts followed by one or more inequalities.

\subsection{A generic residual representation}
\label{subsec:generic_representation}

For motivational purposes, we start by considering the standard
derivation of a dual-weighted residual error estimator for Poisson's
equation: $- \Delta u = f$ and its corresponding variational problem
defined by $a(u, v) = \inner{\grad u}{\grad v}$ and $L(v) =
\inner{f}{v}$ on $\trialspace = \testspace = H^1_0(\Omega)$. By
integrating the weak residual by parts cell-wise, one obtains
\begin{equation*}
  \begin{split}
    r(z)
    &\equiv
    L(z) - a(u_h, z)
    \equiv
    \inner{f}{z} - \inner{\grad u_h}{\grad z} \\
    &=
    \sum_{T\in\triang} \inner{f}{z}_T - \inner{\grad u_h}{\grad z}_T
    =
    \sum_{T\in\triang}
    \inner{f + \Delta u_h}{z}_T +
    \inner{- \partial_n u_h}{z}_{\partial T} \\
    &=
    \sum_{T\in\triang}
    \inner{f + \Delta u_h}{z}_T +
    \inner{\average{- \partial_n u_h}}{z}_{\partial T},
  \end{split}
\end{equation*}
where $\average{- \partial_n u_h}$ denotes an appropriate
redistribution of the flux over cell facets. Several choices are
possible, see for example~\cite[Chap.~6]{ainsworth_posteriori_2000},
but we here make the simplest possible choice and distribute the flux
equally. In particular, we define $\average{\partial_n u_h}|_{\facet}
= \frac{1}{2}(\grad u_h|_{T} \cdot n + \grad u_h|_{T'} \cdot n')$ over
all internal facets $S$ shared by two cells $T$ and $T'$, and
$\average{\partial_n u_h}|_{\facet} = \partial_n u_h|_{\facet}$ on
external facets (facets on the boundary of $\Omega$). Hence, one may
estimate the error by
\begin{equation} \label{eq:errorestimate}
  |\goal(u) - \goal(u_h)| \leq \sum_{T\in\triang} \eta_T,
\end{equation}
where the error indicator $\eta_T$ is given by
\begin{equation}
  \label{eq:errorindicator:poisson}
  \eta_T =
  |\inner{f + \Delta u_h}{z - \pi_h z}_T +
  \inner{\average{- \partial_n u_h}}{z - \pi_h z}_{\partial T}|.
\end{equation}

We note that although one may in principle use $\eta_T =
|\inner{f}{z}_T - \inner{\grad u_h}{\grad z}_T|$ as an error indicator
(without integrating by parts and redistributing the normal
derivative), that indicator is much less efficient than the error
indicator defined in~\eqref{eq:errorindicator:poisson}. Both
indicators will sum up to the same value (if taken with signs), but
only as a result of cancellation. The error
indicator~\eqref{eq:errorindicator:poisson} is generally smaller in
magnitude, scales better with mesh refinement, and gives a sharper
error bound when summed without
signs. See~\cite{wahlberg_evaluation_2009} for an extended discussion.

Estimates similar to~\eqref{eq:errorestimate} have been derived by
hand (originally for use with norm-based error indicators) for a
variety of equations. A non-exhaustive list of examples (including
purely norm-based indicators) includes standard finite element
discretizations of the Poisson equation~\cite{babuska_error_1978},
various mixed formulations for the Stokes equations and stationary
Navier--Stokes equations~\cite{verfurth_posteriori_1989},
$H(\Div)$-based discretizations of the mixed Poisson and mixed
elasticity equations~\cite{braess_posteriori_1996,
  lonsing_posteriori_2004}, and $H(\Curl)$-based discretizations for
problems in electromagnetics~\cite{beck2000residual}. Duality-based
goal-oriented error estimates have been derived for a number of
applications, including ordinary differential
equations~\cite{estep1994global},
plasticity~\cite{rannacher1998posteriori}, hyperbolic
systems~\cite{larson11posteriori}, reactive compressible
flow~\cite{sandboge1999adaptive}, systems of nonlinear
reaction--diffusion
equations~\cite{estep2000estimating,sandboge1998adaptive}, eigenvalue
problems~\cite{heuveline2001posteriori}, wave
propagation~\cite{bangerth2001adaptive}, radiative
transfer~\cite{richling2001radiative}, nonlinear
elasticity~\cite{larsson2002strategies}, the incompressible
Navier--Stokes equations~\cite{becker2002optimal,hoffman2004duality},
variational multiscale problems~\cite{larson2005adaptive}, and
multiphysics problems~\cite{larson2008adaptive}.

These estimates share a common factor, namely that the error is
expressed as a sum of contributions from the cells and the facets of
the mesh. Moreover, each of these estimates has been derived manually
for the specific problem at hand. Here, we aim to demonstrate that for
a large class of variational problems, one may automatically compute
an equivalent residual representation. The representation takes the
following generic form:
\begin{equation}
  \label{eq:residual_representation}
  r(v) = \sum_{T \in \triang_h} \inner{R_T}{v}_{T}
  + \inner{R_{\partial T}}{v}_{\partial T}
  = \sum_{T \in \triang_h}
  \inner{R_T}{v}_{T}
  + \average{\inner{R_{\partial T}}{v}_{\partial T}},
\end{equation}
where
\begin{equation*}
  \average{ \inner{R_{\partial T}}{v}_{\partial T}}
  = \sum_{\facet \subset \partial T \cap \Omega}
  \frac{1}{2}  \left ( \inner{R_{\partial T}}{v|_{T}}_{\facet}
  + \inner{R_{\partial T'}}{v|_{T'}}_{\facet} \right )
  + \sum_{\facet \subset \partial T \cap \partial \Omega}
  \inner{R_{\partial T}}{v}_{\facet}.
\end{equation*}
It follows that one may use as error indicators
\begin{equation} \label{eq:error_indicator}
  \eta_T = |
  \inner{R_T}{z - \pi_h z}_T +
  \average{\inner{R_{\partial T}}{z - \pi_h z}_{\partial T}}|.
\end{equation}
In these expressions, $R_T$ denotes a residual contribution evaluated
over the domain of a cell~$T$, whereas $R_{\partial T}$ denotes a
residual contribution evaluated over a cell boundary~$\partial T$.

\subsection{Automatic computation of the residual representation}
\label{subsec:automated_computation}

We shall focus our attention on a class of residuals $r$ satisfying
the following assumptions:
\begin{description}
\item[\textbf{\assumption{1}}](Global decomposition)
  The residual is a sum of local contributions:
  \begin{equation*}
    r = \sum_{T\in\triang_h} r_T.
  \end{equation*}
  \item[\textbf{\assumption{2}}](Local decomposition) Each local
    residual $r_T$ offers a local decomposition:
    \begin{equation}
      \label{eq:decomposition}
      r_T(v)
      = \inner{R_T}{v}_{T} + \inner{R_{\partial T}}{v}_{\partial T}
      \quad \foralls v \in \testspace|_{T}.
    \end{equation}
\end{description}
We note that~\assumption{1} is satisfied if the bilinear and linear
forms $a$ and $L$ are expressed as integrals over the cells and facets
of the tessellation~$\triang_h$. We also note that~\assumption{2} is
satisfied if the variational problem~\eqref{eq:primal,linear} has been
derived by testing a partial differential equation against a test
function and (possibly) integrating by parts to move derivatives onto
the test function.

For the sake of a simplified analysis, we also make the following
assumption:
\begin{description}
  \item[\textbf{\assumption{3}}] (Polynomial representation) The
    residual contributions (or, in the case of vector or tensor
    fields, each scalar component of these) are piecewise polynomial:
    \begin{equation*}
      R_T \in \Poly{p}(T),
      \quad R_{\partial T}|_{\facet} \in \Poly{q}(\facet)
      \quad \foralls S \in \partial T \quad \foralls T \in \triang_h,
      \quad p, q \in \N.
    \end{equation*}
\end{description}
We discuss the implications of this assumption below, but note that one
of the numerical examples presented does not satisfy this assumption.

\begin{figure}
  \begin{center}
    \includegraphics[width=0.6\textwidth]{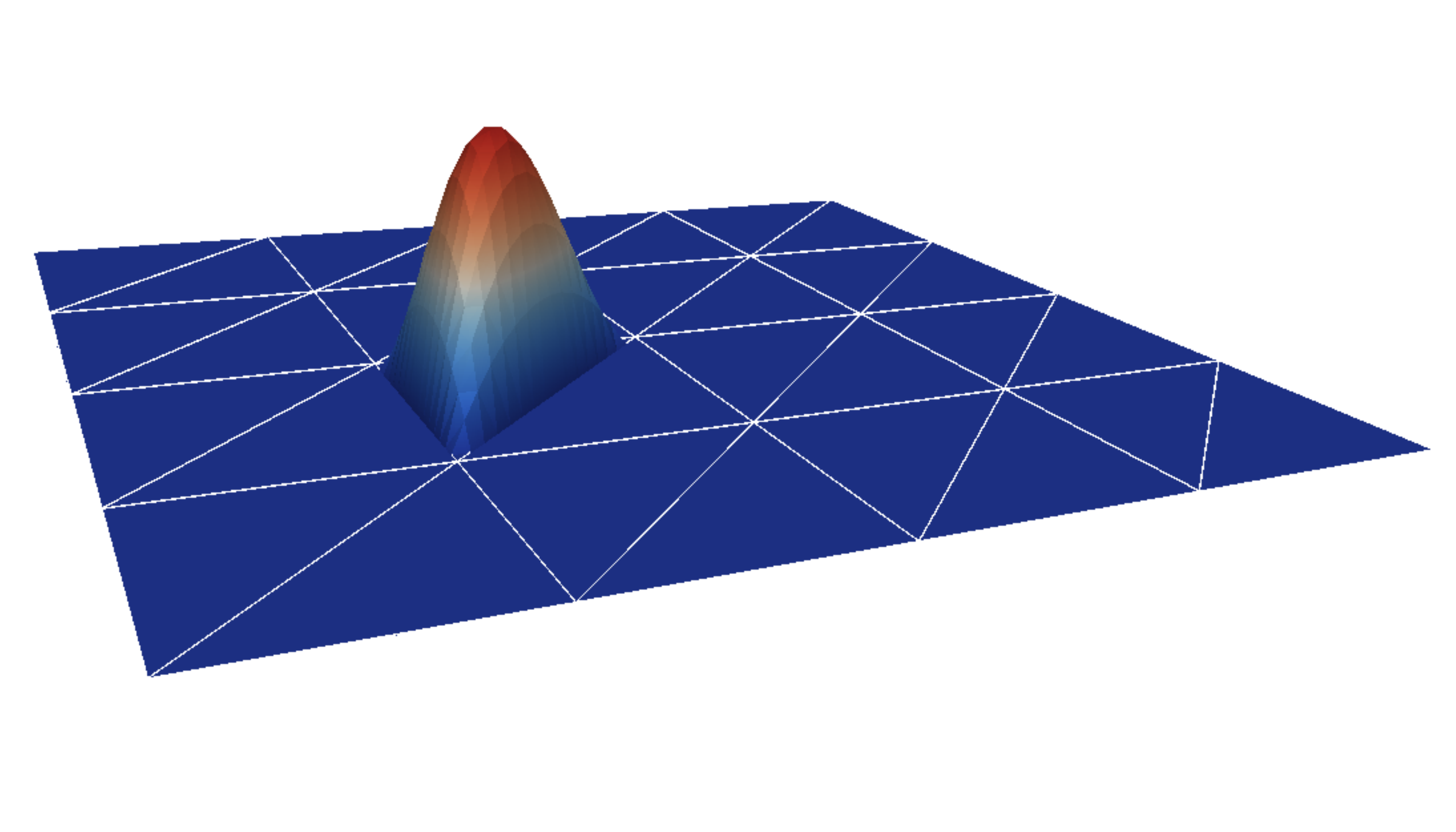}
    \includegraphics[width=0.3\textwidth]{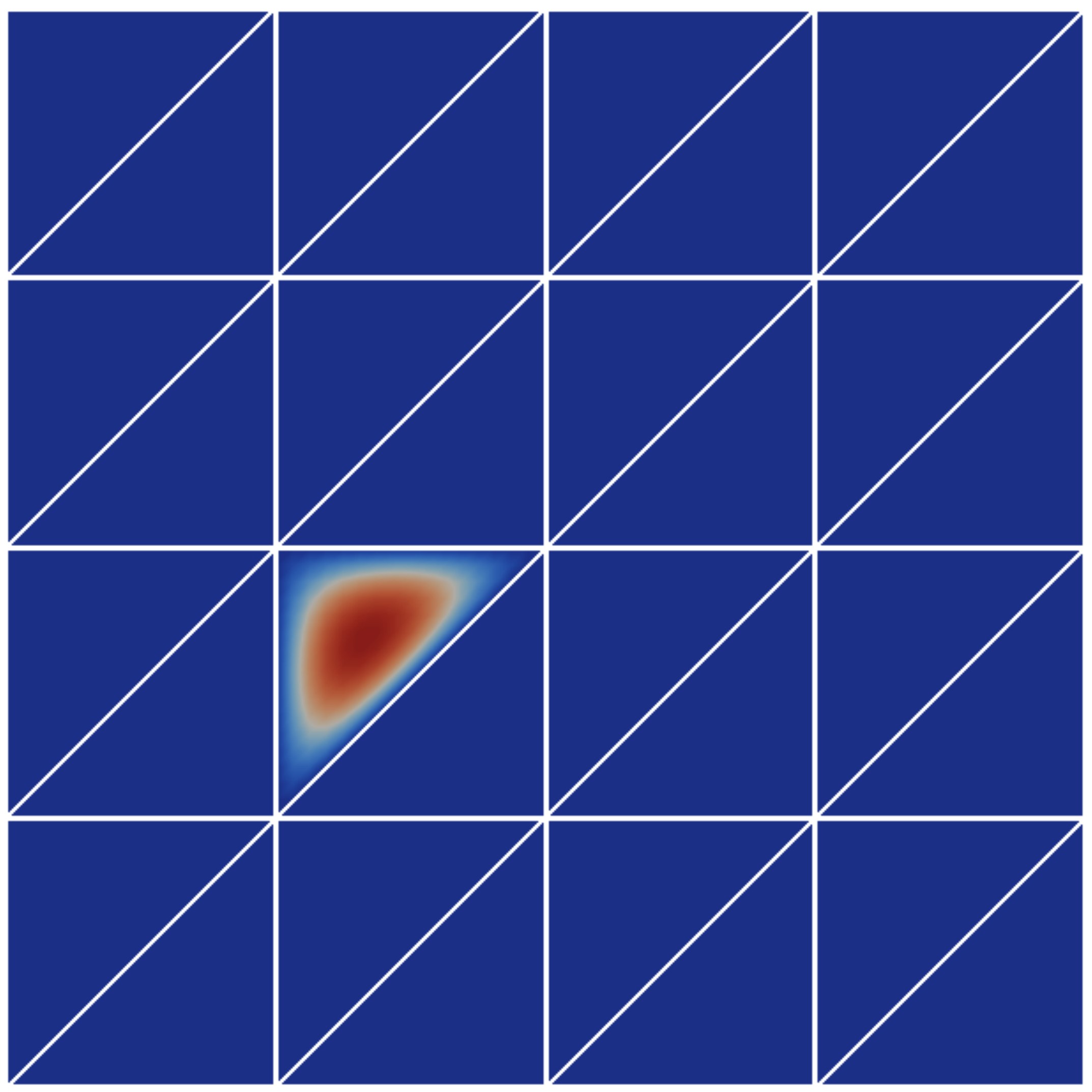}
    \caption{The bubble function~$b_T$.}
    \label{fig:cell_bubble}
  \end{center}
\end{figure}

In the following, we shall show that a residual representation may be
automatically computed if assumptions~\assumption{1}--\assumption{2}
are satisfied. More precisely, if
assumptions~\assumption{1}--\assumption{3} are satisfied, we shall
show that one may automatically compute the exact residual
representation~\eqref{eq:residual_representation} for a given
variational problem~\eqref{eq:primal,linear}. In particular, one may
directly compute the cell and facet residuals $R_T$ and $R_{\partial
T}$ by solving a set of local problems on each cell~$T$ of the
tessellation~$\triang_h$. If~\assumption{3} fails; that is, if only
assumptions~\assumption{1}--\assumption{2} are satisfied, the
automated procedure computes weighted $L^2$-projections of the
residual decomposition terms and hence an approximate residual
representation.

To compute the cell residual $R_T$, let $\{\phi_i\}_{i=1}^m$ be a
basis for $\Poly{p}(T)$ and let $\bubble_T$ denote the bubble function
on $T$. We recall that for a simplex $T \subset \R^d$, the bubble
function $\bubble_T$ is defined by
\begin{equation*}
  \bubble_{T} = \prod_{i=1}^{d+1} \lambda_{x_i}^T
\end{equation*}
where $\lambda_{x_i}^T$ is the barycentric coordinate function on $T$
associated with vertex $x_i$ (the $i$th linear Lagrange nodal basis
function on $T$). Note that $\bubble_T$ vanishes on the boundary of
$T$. See Figure~\ref{fig:cell_bubble} for an illustration. Testing the
local residual~$r_T$ against $b_T \phi_i$, we obtain the following
local problem for the cell residual $R_T$: find $R_T \in \Poly{p}(T)$
such that
\begin{equation}
  \label{eq:cell_residual}
  \inner{R_T}{\bubble_T \phi_i}_T
  = r_T(\bubble_T \phi_i), \quad i = 1, \dots m.
\end{equation}

To obtain a local problem for the facet residual~$R_{\partial T}$, we
define for each facet~$\facet$ on $T$ the \emph{cone
  function}~$\cone_{\facet}^T$ by
\begin{equation}
  \cone_{\facet}^T = \prod_{i \in I_{\facet}^T} \lambda_{x_i}^T,
\end{equation}
where $I_{\facet}^T$ is a suitably defined index set such that
$\cone_{\facet}^T|_{f} \equiv 0$ on all facets $f$ of $T$ but
$\facet$. For an illustration, see
Figure~\ref{fig:edge_bubble}. Clearly, $\cone_{\facet}^T|_{\facet} =
\bubble_{\facet}$. Next, let $\{\phi_i\}_{i=1}^{n}$ be a basis for
$\Poly{q}(T)$. Testing the local residual~$r_T$ against
$\cone_{\facet}^T \phi_i$, we obtain the following local problem for
each facet residual: find $R_{\partial T}|_{\facet} \in
\Poly{q}(\facet)$ such that
\begin{equation}
  \label{eq:facet_residual}
  \inner{R_{\partial T}|_{\facet}}{\cone_{\facet}^T \phi_i}_{\facet}
  = r_T(\cone_{\facet}^T \phi_i) - \inner{R_T}{\cone_{\facet}^T \phi_i}_T
  \quad \foralls i \in I_{\facet}^T.
\end{equation}
We prove below that by assumptions~\assumption{1}--\assumption{3}, the
local problems~\eqref{eq:cell_residual} and~\eqref{eq:facet_residual}
uniquely define the cell and facet residuals $R_T$ and $R_{\partial
  T}$ of the residual
representation~\eqref{eq:residual_representation}.

\begin{figure}
  \begin{center}
    \includegraphics[width=0.6\textwidth]{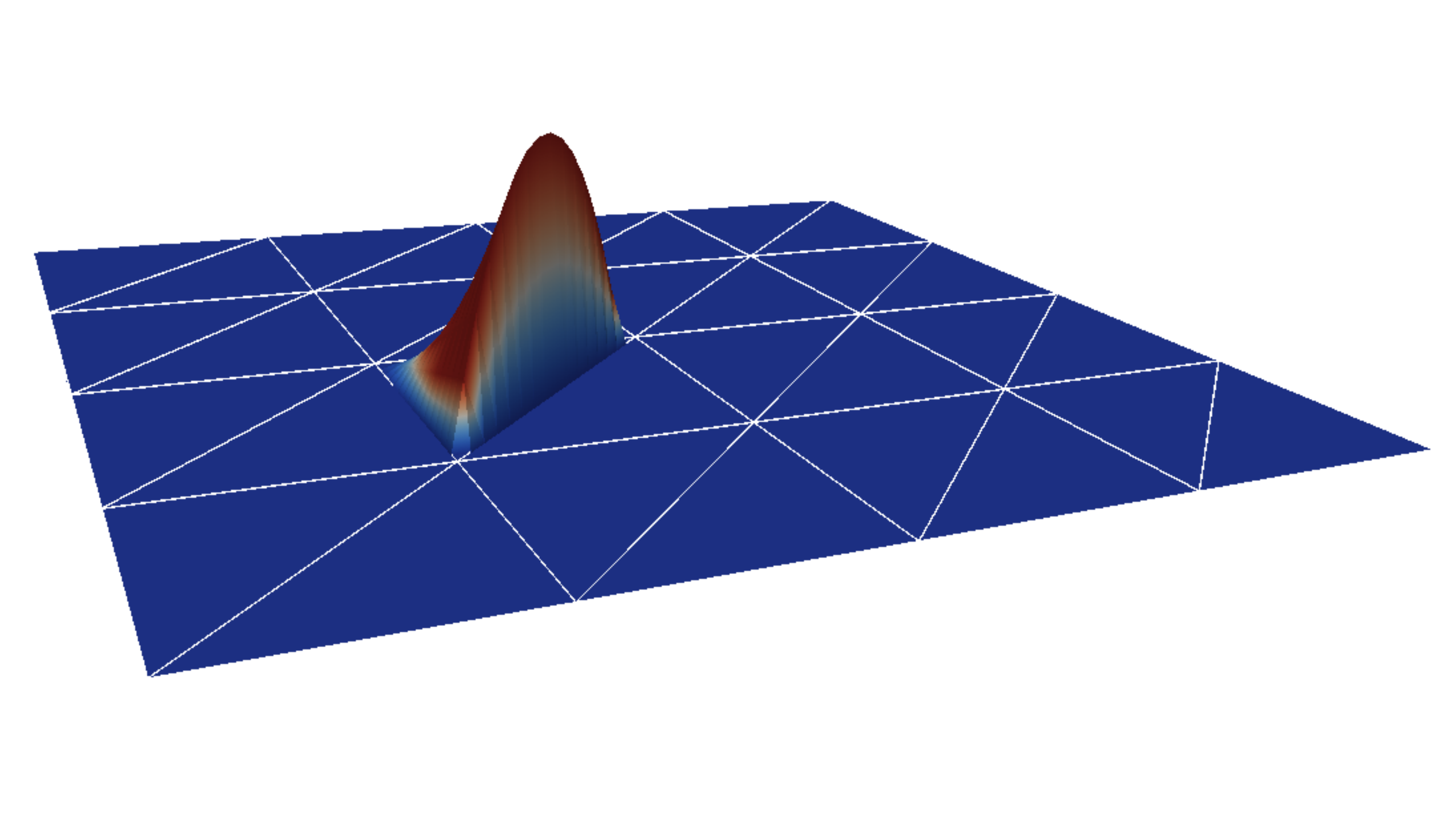}
    \includegraphics[width=0.3\textwidth]{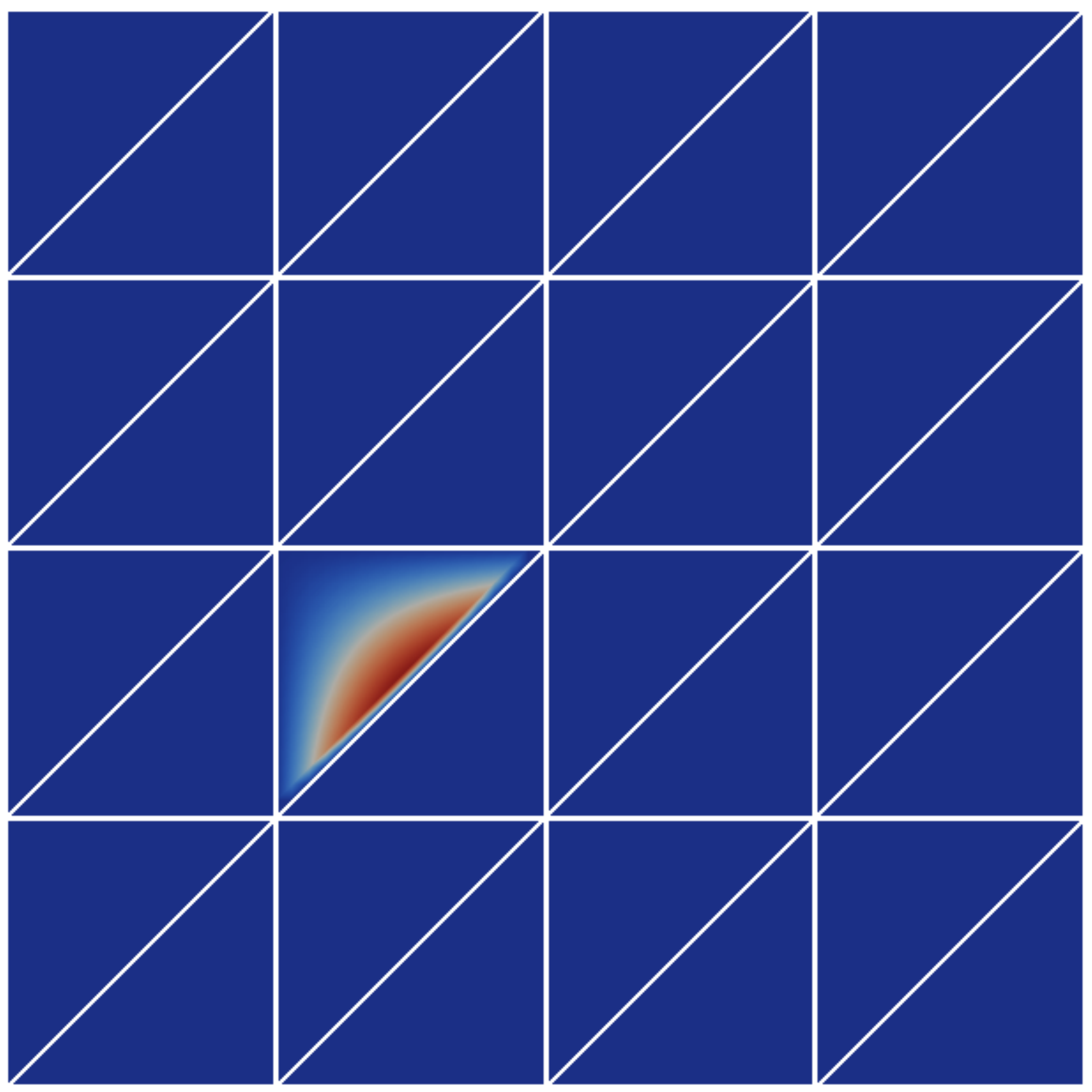}
    \caption{The cone function $\cone_{\facet}^T$.}
    \label{fig:edge_bubble}
  \end{center}
\end{figure}

One may thus compute the residual
representation~\eqref{eq:residual_representation} by solving a set of
local problems on each cell~$T$. First, one local problem for the cell
residual~$R_T$, and then local problems for the facet
residual~$R_{\partial T}$ restricted to each facet of~$T$. If the test
space is vector-valued, the local problems are solved for each scalar
component. We emphasize that the computation of the residual
representation~\eqref{eq:residual_representation} and thus the error
indicator~\eqref{eq:error_indicator} may be computed automatically
given only the variational problem~\eqref{eq:primal,linear} in terms
of the pair of bilinear and linear forms~$a$ and~$L$. In particular,
the derivation of the error indicators does not involve any manual
analysis.

We remark that the use of bubble and cone functions to localize the
weak residual is a standard technique for proving reliability and
efficiency for norm-based error estimates (see
e.g.~\cite{verfrth_review_1999}). The crucial observation here is that
this technique can also be used to automatically generate the residual
decomposition given only the weak residual. We also remark that the
local problems \eqref{eq:cell_residual} and~\eqref{eq:facet_residual}
are different from the local problems that were introduced
in~\cite{bank_posteriori_1985} to represent the cell and facet
residuals~$R_T$ and~$R_{\partial T}$ as a single residual.

\subsection{Solvability of the local problems}
\label{subsec:solvability}

To prove that the local problems~\eqref{eq:cell_residual}
and~\eqref{eq:facet_residual} uniquely determine the cell and facet
residuals, we recall the following result regarding bubble-weighted
$L^2$-norms. For a proof, we refer to~\cite[Theorems 2.2,
  2.4]{ainsworth_posteriori_2000}.
\begin{lem}
  \label{lem:bubble_weighted}
  Let $T$ be a $d$-simplex and let $b_T$ denote the bubble function on
  $T$. There exist positive constants $c$ and $C$, independent of $T$,
  such that
  \begin{equation}
    c ||\phi||_T^2 \leq \langle b_T \phi, \phi \rangle_T
    \leq C ||\phi||_T^2
  \end{equation}
  for all $\phi \in \Poly{p}(T)$.
\end{lem}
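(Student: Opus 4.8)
The plan is to reduce both inequalities to a fixed reference simplex by an affine change of variables, and then to invoke the equivalence of norms on the finite-dimensional space $\Poly{p}$.

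The upper bound I would dispatch first, without any scaling: each barycentric coordinate satisfies $0 \le \lambda_{x_i}^T \le 1$ on $T$, so $0 \le b_T \le 1$ (indeed $b_T \le (d+1)^{-(d+1)}$ by the arithmetic--geometric mean inequality), and therefore
\[
  \langle b_T \phi, \phi \rangle_T = \int_T b_T \phi^2 \dx \le \int_T \phi^2 \dx = \|\phi\|_T^2,
\]
so $C = 1$ works.

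For the lower bound I would pass to a fixed reference $d$-simplex $\hat T$ with affine bijection $F_T \colon \hat T \to T$. Since barycentric coordinates are affine-invariant, $\lambda_{x_i}^T \circ F_T = \hat\lambda_i$ and hence $b_T \circ F_T = \hat b$, the bubble function on $\hat T$; moreover $\hat\phi := \phi \circ F_T \in \Poly{p}(\hat T)$ whenever $\phi \in \Poly{p}(T)$. The change of variables $x = F_T(\hat x)$ introduces the constant Jacobian factor $|\det F_T'|$ in both $\int_T b_T \phi^2 \dx$ and $\int_T \phi^2 \dx$, which cancels in the ratio:
\[
  \frac{\langle b_T \phi, \phi \rangle_T}{\|\phi\|_T^2}
  = \frac{\int_{\hat T} \hat b\, \hat\phi^2 \dX}{\int_{\hat T} \hat\phi^2 \dX}.
\]
It then suffices to bound the right-hand side below by a constant depending only on $p$ and $d$. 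On $\Poly{p}(\hat T)$ the map $\hat\phi \mapsto \big(\int_{\hat T} \hat b\, \hat\phi^2 \dX\big)^{1/2}$ is a norm: it is a seminorm because $(\psi, \phi) \mapsto \int_{\hat T} \hat b\, \psi\phi \dX$ is a symmetric positive semidefinite bilinear form, and it is definite because $\hat b > 0$ on the interior of $\hat T$, so $\int_{\hat T} \hat b\, \hat\phi^2 \dX = 0$ forces $\hat\phi$ to vanish almost everywhere, hence identically as a polynomial. Since all norms on the finite-dimensional space $\Poly{p}(\hat T)$ are equivalent, there is a constant $c > 0$, depending only on $p$ and $d$, with $\int_{\hat T} \hat b\, \hat\phi^2 \dX \ge c \int_{\hat T} \hat\phi^2 \dX$; combined with the displayed identity this yields $\langle b_T \phi, \phi \rangle_T \ge c \|\phi\|_T^2$.

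The step I would treat most carefully — and the only real point at issue — is the claimed independence of the constants from $T$: this is precisely what the cancellation of $|\det F_T'|$ secures, so that all dependence on the size and shape of $T$ is absorbed into the fixed reference configuration $(\hat T, \hat b)$ and the fixed polynomial degree, with nothing left over. The definiteness of the reference norm, the one place where a genuine (if one-line) argument is needed, follows immediately from strict positivity of $\hat b$ on the open simplex.
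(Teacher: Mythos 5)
Your proof is correct. The paper does not prove this lemma itself but simply cites Theorems 2.2 and 2.4 of Ainsworth--Oden, and your argument --- upper bound from $0 \le b_T \le 1$, lower bound by affine pullback to a fixed reference simplex (where the Jacobian factor cancels in the quotient) combined with equivalence of norms on the finite-dimensional space $\Poly{p}(\hat T)$, with definiteness of the bubble-weighted norm coming from $\hat b > 0$ on the open simplex --- is precisely the standard proof of that cited result, including the key point that $T$-independence of the constants follows from the Jacobian cancellation rather than any shape-regularity assumption.
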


We may now prove the following theorem.
\begin{theorem}
  \label{th:solvability}
  If assumptions~\assumption{1}--\assumption{3} hold, then the cell
  and facet residuals of the residual
  representation~\eqref{eq:residual_representation} are uniquely
  determined by the local problems~\eqref{eq:cell_residual} and
  \eqref{eq:facet_residual}.
\end{theorem}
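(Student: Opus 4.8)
The plan is to show that each of the local problems \eqref{eq:cell_residual} and \eqref{eq:facet_residual} is a square linear system with a symmetric positive-definite coefficient matrix --- hence uniquely solvable --- and then to check that the cell and facet residuals supplied by the local decomposition \eqref{eq:decomposition} of \assumption{2} actually solve these systems. Since the systems have at most one solution, the residuals $R_T$ and $R_{\partial T}$ appearing in the representation \eqref{eq:residual_representation} must then coincide with those computed from \eqref{eq:cell_residual}--\eqref{eq:facet_residual}.

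First I would handle the cell residual on a cell $T$. Expanding $R_T = \sum_{j=1}^m \xi_j \phi_j$ in the basis $\{\phi_i\}_{i=1}^m$ of $\Poly{p}(T)$, equation \eqref{eq:cell_residual} becomes the linear system $M \xi = f$ with $M_{ij} = \inner{\bubble_T \phi_j}{\phi_i}_T$ and $f_i = r_T(\bubble_T \phi_i)$. Since $\xi^\top M \xi = \inner{\bubble_T \phi}{\phi}_T$ for $\phi = \sum_j \xi_j \phi_j$, Lemma~\ref{lem:bubble_weighted} gives $\xi^\top M \xi \geq c \, \|\phi\|_T^2 > 0$ whenever $\xi \neq 0$; hence $M$ is symmetric positive definite and \eqref{eq:cell_residual} has a unique solution in $\Poly{p}(T)$. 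For consistency, since $\bubble_T$ vanishes on $\partial T$, applying the local decomposition \eqref{eq:decomposition} to $v = \bubble_T \phi_i$ yields $r_T(\bubble_T \phi_i) = \inner{R_T}{\bubble_T \phi_i}_T$; thus the cell residual of \eqref{eq:decomposition} solves \eqref{eq:cell_residual}, and by the uniqueness just established it is the only element of $\Poly{p}(T)$ that does.

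Next I would treat the facet residual on a fixed facet $\facet \subset \partial T$, with $R_T$ now determined. The test function $\cone_\facet^T \phi_i$ vanishes on every facet of $T$ except $\facet$, where $\cone_\facet^T|_\facet = \bubble_\facet$ is the bubble function of the $(d-1)$-simplex $\facet$; consequently the coefficient matrix of \eqref{eq:facet_residual} has entries $\inner{\bubble_\facet \, \phi_j|_\facet}{\phi_i|_\facet}_\facet$, i.e.\ it is again a bubble-weighted Gram matrix, now on $\Poly{q}(\facet)$ --- provided the test functions are chosen so that $\{\phi_i|_\facet : i \in I_\facet^T\}$ is a basis of $\Poly{q}(\facet)$, which is the purpose of the index set $I_\facet^T$. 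Lemma~\ref{lem:bubble_weighted}, applied on $\facet$, then shows this matrix is symmetric positive definite, so \eqref{eq:facet_residual} is uniquely solvable in $\Poly{q}(\facet)$. Consistency follows as before: applying \eqref{eq:decomposition} to $v = \cone_\facet^T \phi_i$ and using that $\cone_\facet^T$ vanishes on $\partial T \setminus \facet$ gives $r_T(\cone_\facet^T \phi_i) = \inner{R_T}{\cone_\facet^T \phi_i}_T + \inner{R_{\partial T}|_\facet}{\cone_\facet^T \phi_i}_\facet$, which is precisely \eqref{eq:facet_residual}; hence $R_{\partial T}|_\facet$ of \eqref{eq:decomposition} solves it and, by uniqueness, is its only solution. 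Ranging over all facets of all cells completes the argument.

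The difficulty here is bookkeeping rather than depth. One must check that the weighted test functions $\bubble_T \Poly{p}(T)$ and $\cone_\facet^T \Poly{q}(T)$ are admissible in the local decomposition \eqref{eq:decomposition} --- which is harmless, since \eqref{eq:decomposition} is in fact an identity of cell- and facet-integrals valid for any sufficiently smooth $v$ --- and that the restrictions to $\facet$ of the chosen test functions span $\Poly{q}(\facet)$, so that \eqref{eq:facet_residual} is genuinely a square system; both follow from the structure assumed in \assumption{1}--\assumption{3} and from the construction of the cone functions. As a byproduct, the same coercivity estimate shows that the decomposition \eqref{eq:decomposition} is itself unique under \assumption{3}, so the phrase ``the cell and facet residuals of the residual representation'' is unambiguous.
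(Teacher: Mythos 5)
Your proof is correct and follows essentially the same route as the paper: test the local decomposition \eqref{eq:decomposition} with $\bubble_T \phi_i$ and $\cone_{\facet}^T \phi_i$ to see that the residuals of \eqref{eq:decomposition} solve the local problems, then invoke Lemma~\ref{lem:bubble_weighted} for uniqueness. You merely make explicit what the paper leaves implicit, namely that the lemma's coercivity renders the bubble-weighted Gram matrices (on $T$ and, via $\cone_{\facet}^T|_{\facet} = \bubble_{\facet}$, on $\facet$) symmetric positive definite, so each square local system has exactly one solution.
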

\begin{proof}
  Consider first the cell residual~$R_T$. Take $v = \bubble_T \phi_i$
  in~\eqref{eq:decomposition} for $i = 1, \dots, m$.  Since $v$
  vanishes on the cell boundary $\partial T$, we
  obtain~\eqref{eq:cell_residual}. By assumption, $R_T \in
  \Poly{p}(T)$ and is thus a solution of the local
  problem~\eqref{eq:cell_residual}. It follows from
  Lemma~\ref{lem:bubble_weighted}, that it is the unique solution.  We
  similarly see that the facet residual~$R_{\partial T}$ is a solution
  of the local problem~\eqref{eq:facet_residual} and uniqueness
  follows again from Lemma~\ref{lem:bubble_weighted}.
\end{proof}

In the cases where~\assumption{3} fails, such as if the variational
problem contains non-polynomial data, the local
problems~\eqref{eq:cell_residual} and~\eqref{eq:facet_residual}
uniquely determine the projections of $R_T$ and $R_{\partial
  T}|_{\facet}$ onto $\Poly{p}(T)$ and $\Poly{q}(\facet)$
respectively. The accuracy of the approximation may then be controlled
by the polynomial degrees $p$ and $q$. In the numerical examples
presented in Section~\ref{sec:numerics}, we let $p = q$ be determined
by the polynomial degree of the finite element space. We have not
observed any significant errors introduced by this approximation in
our numerical experiments.

Looking back at the special case of Poisson's
equation~\eqref{eq:errorindicator:poisson}, the cell and facet
residuals derived by hand are given by $R_T = f + \Delta u_h$ and
$R_{\partial T} = - \partial_n u_h$, respectively. We emphasize that,
by what is just shown, this indeed coincides with the representation
defined by~\eqref{eq:cell_residual} and~\eqref{eq:facet_residual} if
$f$ is polynomial.

\section{Approximating the dual solution}
\label{sec:dual_approximation}

In order to evaluate the error
representation~\eqref{eq:error_representation} and to compute the
error indicators~\eqref{eq:error_indicator}, one must compute, or in
practice approximate, the solution~$z$ of the dual
problem~\eqref{eq:dual}. The natural discretization of~\eqref{eq:dual}
reads: find $z_h \in \dualtrialspace_h = \testspace_h$ such that
\begin{equation}
  \label{eq:dual,discrete}
  a^*(z_h, v) = \goal(v) \quad
  \foralls v \in \dualtestspace_h = \trialspace_{h, 0}.
\end{equation}
However, since the residual~$r$ vanishes on $\testspace_h$, $z_h$ is,
for the purpose of error estimation, highly unsuitable as an
approximation of the dual solution.

An immediate alternative is to solve the dual problem using a higher
order method. If the dual solution is sufficiently regular, a higher
order method would be expected to give a more accurate dual
approximation. It is observed in practice that a more accurate dual
approximation gives a better error
estimate~\cite{becker_optimal_2001}, although complete reliability
cannot be guaranteed~\cite{nochetto_safeguarded_2008}. Other
alternatives include approximation by hierarchic
techniques~\cite{ainsworth_posteriori_2000, bank_posteriori_1993} or
approximating the dual problem on a different mesh. In this work, we
suggest a new alternative based on solving~\eqref{eq:dual,discrete}
using the same mesh and polynomial order as the primal problem and
then extrapolating the computed solution~$z_h$ to a higher order
function space. This strategy can be compared to the higher order
interpolation procedure presented in~\cite{becker_optimal_2001} for
regular quadrilateral/hexahedral meshes. The strategy presented here
extends that of~\cite{becker_optimal_2001} however, as it can be
applied to almost arbitrary (admissible) simplicial tessellations.

To define the extrapolation procedure, let $V_h$ be a finite element
space on a tessellation~$\triang_h$ and let $\EVh \supset V_h$ be a
higher order finite element space on the same
tessellation~$\triang_h$. Furthermore, let $\{\phi^T_j\}_{j=1}^n$ be a
local basis for~$\EVh$ on~$T$ and let $\{\phi_j\}_{j=1}^N$ be the
corresponding global basis. For $v_h \in V_h$, we define the
extrapolation operator $E : V_h \rightarrow \EVh$ as described in
Algorithm~\ref{alg:extrapolation}. This algorithm computes the
extrapolation by fitting local polynomials to the finite element
function~$v_h$ on local patches. This yields a global multi-valued
function which is then averaged to obtain the extrapolation~$Ev_h$. We
illustrate the extrapolation algorithm in
Figure~\ref{fig:extrapolation} for a one-dimensional case.
\begin{algorithm}[t]
  \begin{enumerate}
  \item (Lifting) For each cell $T \in \triang_h$:
    \begin{enumerate}
    \item
      Define a patch of cells $\omega_T \supset T$ of sufficient size
      and let $\{\ell_i\}_{i=1}^m$ be the collection of degrees of
      freedom for $V_h$ on the patch. The size of the patch $\omega_T$
      should be such that the number of degrees of freedom $m$ is
      greater than or equal to the local dimension $n$ of $\EVh|_T$.
    \item
      Let $\{\phi^{\omega_T}_j\}_{j=1}^n$ be a smooth extension of
      $\{\phi^T_j\}_{j=1}^n$ to the patch $\omega_T$.
    \item
      Define $A_{ij} = \ell_i(\phi_j^{\omega_T})$ and $b_i =
      \ell_i(v_h)$ for $i = 1, \dots, m$, $j = 1, \dots, n$.
    \item
      Compute the least-squares approximation $\xi_T$ of the
      (overdetermined) $m \times n$ system $A \xi_T = b$.
    \end{enumerate}
  \item (Smoothing)
    \begin{enumerate}
    \item
      For each global degree of freedom $j$, let $X_j$ be the set of
      corresponding local expansion coefficients determined on each
      cell~$T$ by the local vector $\xi_T$. Define $\xi_j =
      \frac{1}{|X_j|} \sum_{x \in X_j} x$. We note that $|X_j| > 1$
      for degrees of freedom that are shared between cells.
    \item
      Define $E v_h = \sum_{j=1}^N \xi_j \phi_j$.
    \end{enumerate}
  \end{enumerate}
  \caption{Extrapolation}
  \label{alg:extrapolation}
\end{algorithm}
\begin{figure}[h]
  \begin{center}
    \includegraphics[width=0.7\textwidth]{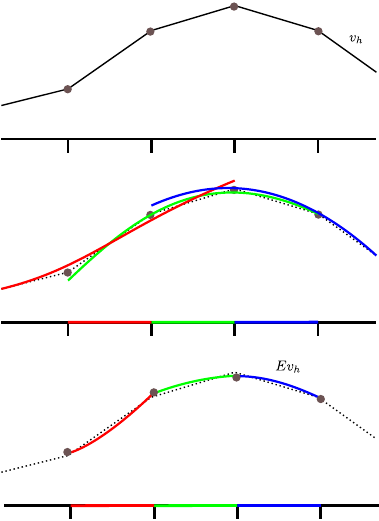}
    \caption{Extrapolation of a continuous piecewise linear
      function~$v_h$ to a continuous piecewise quadratic function $E
      v_h$.  The extrapolation is computed by first fitting a
      quadratic polynomial on each patch. In one dimension, each patch
      is a set of three intervals and each local quadratic polynomial
      is computed by solving an overdetermined $4 \times 3$ linear
      system. The continuous piecewise quadratic extrapolation $E v_h$
      is then computed by averaging at the end-points of each
      interval.}
    \label{fig:extrapolation}
  \end{center}
\end{figure}

Algorithm~\ref{alg:extrapolation} may be used to compute a higher
order approximation of the dual solution~$z$ as follows. First, we
compute an approximation~$z_h \in \testspace_h$ of the dual solution
by solving~\eqref{eq:dual,discrete}. We then compute the
extrapolation~$Ez_h \in \EVh$ where $\EVh$ is the finite element space
on $\triang_h$ obtained by increasing the polynomial degree by one. We
then estimate the error by
\begin{equation*}
  \eta \equiv |\goal(u) - \goal(u_h)| = |r(z)| \approx |r(E z_h)| \equiv \eta_h.
\end{equation*}

\section{Extensions to nonlinear problems and goal functionals}
\label{sec:nonlinear}

We now turn to consider nonlinear variational problems and goal
functionals. We consider the following general nonlinear variational
problem: find $u \in \trialspace$ such that
\begin{equation} \label{eq:primal,nonlinear}
  F(u; v) = 0 \quad \foralls v \in \testspace.
\end{equation}
For a given nonlinear goal functional~$\goal : \trialspace \rightarrow
\R$, we define the following dual problem: find $z
\in \dualtrialspace$ such that
\begin{equation} \label{eq:dual,nonlinear}
  \overline{F'}^*(z, v) = \overline{\goal'}(v)
  \quad \foralls v \in \dualtestspace,
\end{equation}
where, as before, $\dualtestspace = \trialspace_0$ and
$\dualtrialspace = \testspace$. The bilinear form $\overline{F'}$ is
an appropriate average of the Fr\'echet derivative $F'(u; \delta u, v)
\equiv {\partial F(u; v) \over \partial u} \, \delta u$ of $F$,
\begin{equation}
  \label{eq:average}
  \overline{F'}(\cdot, \cdot) = \int_0^1 F'(su + (1-s)u_h; \cdot, \cdot) \ds.
\end{equation}
We note that by the chain rule, we have $\overline{F'}(u - u_h, \cdot)
= F(u; \cdot) - F(u_h; \cdot)$. The linear functional $\overline{M'}$
is defined similarly. Note that~\eqref{eq:dual,nonlinear} reduces
to~\eqref{eq:dual} in the linear case where $F(u; v) = a(u, v) -
L(v)$.

The following error representation now follows directly from the
definition of the dual problem:
\begin{equation*}
  \begin{split}
    \goal(u) - \goal(u_h)
    &= \overline{\goal'}(u - u_h)
    = \overline{F'}^*(z, u - u_h)
    = \overline{F'}(u - u_h, z) \\
    &= F(u; z) - F(u_h; z)
    = - F(u_h; z)
    \equiv r(z).
  \end{split}
\end{equation*}
We thus recover the error representation~\eqref{eq:error_representation}.

In practice, the exact solution~$u$ is not known and must be
approximated by the approximate solution~$u_h$; that is, the linear
operator $\overline{F'}$ is approximated by the derivative of~$F$
evaluated at $u = u_h$. The resulting linearization error may for the
sake of simplicity be neglected, as we shall in this exposition, but
doing so may reduce the accuracy (and reliability) of the computed
error estimates. For a further discussion on the issue of
linearization errors in the definition of the dual problem, we refer
to~\cite{becker_optimal_2001}.

It follows that the techniques described in
Section~\ref{sec:residual_representation}
and~\ref{sec:dual_approximation} directly apply to the residual $r$
and the dual approximation $z_h$ also for the nonlinear case.

\section{A complete algorithm for automated goal-oriented error control}
\label{sec:adaptivity}

Based on the above discussion, we may now phrase the complete
algorithm for automated adaptive goal-oriented error control in
Algorithm~\ref{alg:adaptivity}.
\begin{algorithm}[H]
  Let $F : \trialspace \times \testspace \rightarrow \R$ be a given
  semilinear form, let $\goal : \trialspace \rightarrow \R$ be a given
  goal functional, and let $\tol > 0$ be a given tolerance.
  \begin{enumerate}
  \item
    Select an initial tessellation~$\triang_h$ of the domain~$\Omega$
    and construct the corresponding trial and test spaces
    $\trialspace_h \subset \trialspace$ and $\testspace_h
    \subset \testspace$ (for a given fixed finite element family and
    degree).
  \item
    Compute the finite element solution~$u_h \in \trialspace_h$ of the
    primal problem~\eqref{eq:primal,nonlinear} satisfying $F(u_h; v) =
    0$ for all $v \in \testspace_h$.
  \item
    Compute the finite element solution~$z_h \in \dualtrialspace_h$ of
    the dual problem~\eqref{eq:dual,nonlinear} satisfying
    $\overline{F'}^*(z, v) = \overline{\goal'}(v)$ for all $v
    \in \dualtestspace_h$.
  \item
    Extrapolate $z_h \mapsto E z_h$ using Algorithm~\ref{alg:extrapolation}.
  \item
    Evaluate the error estimate $\eta_h = |F(u_h; Ez_h)|$.
  \item
    If $|\eta_h| \leq \tol$, accept the solution~$u_h$ and
    break. (Stopping criterion)
  \item
    Compute the cell and facet residuals~$R_T$ and $R_{\partial T}$ of
    the residual representation~\eqref{eq:residual_representation} by
    solving the local problems~\eqref{eq:cell_residual}
    and~\eqref{eq:facet_residual}.
  \item
    Compute the error indicators \\
    $\eta_T
    = | \inner{R_T}{Ez_h - \pi_h Ez_h}_T
    + \average{\inner{R_{\partial T}}{Ez_h - \pi_h Ez_h}_{\partial T}}|$.
  \item
    Sort the error indicators in order of decreasing size and mark the
    first~$M$ cells for refinement where $M$ is the smallest number
    such that $\sum_{i=1}^M \eta_{T_i} \geq \alpha
    \sum_{T\in\triang_h} \eta_T$, for some choice of $\alpha \in (0,
    1]$. (D\"orfler marking~\cite{dorfler_convergent_1996})
  \item
    Refine all cells marked for refinement (and propagate refinement
    to avoid hanging nodes).
  \item
    Go back to step 2.
  \end{enumerate}
  \caption{Adaptive algorithm}
  \label{alg:adaptivity}
\end{algorithm}



Algorithm~\ref{alg:adaptivity} has been implemented within the
\fenics{} project~\cite{LoggMardalEtAl2012a,logg_automatingfinite_2007, logg_dolfin_2009}, a collaborative project for the development of concepts
and software for automated solution of differential equations. The
implementation is freely available, and distributed as part of
\dolfin{} (version 0.9.11 and onwards). We discuss some of the
features of the implementation here and provide a simple use case.
More details of the implementation will be discussed in future
work~\cite{rognes_efficient_2012}.

For the specification of variational problems, the Python interface of
\dolfin{} accepts as input variational forms expressed in the form
language \ufl~\cite{Simula.SC.626}. Forms expressed in the \ufl{}
language are automatically passed to the FEniCS form
compiler~\ffc{}~\cite{kirby_compiler_2006,kirby_efficient_2007,LoggOlgaardEtAl2012a}
which generates efficient C++ code for finite element assembly of the
corresponding discrete operators. For a detailed discussion,
see~\cite{logg_dolfin_2009}. Stationary discrete variational problems
can be solved in DOLFIN by calling the \emp{solve} function accepting
as input a variational problem specified by a variational equation
expressed by two variational forms (defining the left- and right-hand
sides), the solution function \emp{u} and any boundary conditions
\emp{bcs}. Our implementation adds the possibility of solving such
problems adaptively with goal-oriented error control by adding a goal
functional \emp{M} and an error tolerance, say \emp{1e-6}:
\begin{python}
solve(a == L, u, bcs, tol=1.e-6, M=M) # Linear case
solve(F == 0, u, bcs, tol=1.e-6, M=M) # Nonlinear case
\end{python}
A simple complete example is listed in Figure~\ref{fig:samplecode}. A
number of optional parameters may be specified to control the behavior
of the adaptive algorithm, including the marking strategy and the
refinement fraction. The default marking strategy is D\"orfler
marking~\cite{dorfler_convergent_1996} with a refinement fraction of
$\alpha = 0.5$.

Internally, the adaptive algorithm relies on the capabilities of the
form language~\ufl{} for generating the dual problem, the local
problems for the cell and facet residuals, and the computation of
error indicators. As an illustration, we show here the code for
generating the bilinear form~$a^* = F'^*$ of the dual
problem~\eqref{eq:dual,nonlinear}:
\begin{python}
a_star = adjoint(derivative(F, u))
\end{python}

\begin{figure}
\begin{python}
from dolfin import *

mesh = UnitSquare(4, 4)
V = FunctionSpace(mesh, "CG", 1)
u = Function(V)
v = TestFunction(V)
f = Constant(1.0)

F = inner((1 + u**2)*grad(u), grad(v))*dx - f*v*dx
bc = DirichletBC(V, 0.0, "near(x[0], 0.0)")

M = u*dx
solve(F == 0, u, bc, tol=1.e-3, M=M)
\end{python}
\caption{Complete code for the automated adaptive solution of a
  nonlinear Poisson-like problem on the unit square with $f = 1.0$,
  homogeneous Dirichlet boundary conditions on the left boundary and
  homogeneous Neumann conditions on the remaining boundary, with goal
  functional $\mathcal{M} = \int_{\Omega} u \dx$.}
\label{fig:samplecode}
\end{figure}

\section{Numerical examples}
\label{sec:numerics}

In this section, we aim to investigate the performance of the
automated algorithm. Since the theoretical properties of the proposed
extrapolation procedure are largely unknown, the investigation here
focuses on the quality of the error estimate and the sum of the error
indicators on adaptively refined meshes. The total computational
efficiency of the automated adaptive algorithm will be investigated in
later works~\cite{rognes_efficient_2012}.

We present three numerical examples from three different application
areas, aiming to illustrate different characteristics and varying
levels of complexity. We begin by considering a basic example: a
standard discretization of the Poisson equation, and evaluate the
quality of the error estimates; the results show that the algorithm
gives error indicators close to the optimal value of one. The second
example is a discretization of a weakly symmetric formulation for
linear elasticity. This discretization involves a nontrivial finite
element space, namely, a mixed finite element space consisting of
multiple Brezzi--Douglas--Marini elements, and multiple discontinuous
and continuous elements. As far as the authors are aware, this is the
first demonstration of goal-oriented error control for the
discretization presented. The results show that the algorithm produces
error estimates of optimal quality also for this far more complicated
case. Finally, we consider a nonlinear, nonsmooth example of
wide-spread use: a mixed discretization of the incompressible
Navier--Stokes equations and evaluate both the quality of the error
estimates and the performance of the adaptive algorithm.

\subsection{The Poisson equation}
\label{subsec:poisson}

We begin by considering the Poisson equation:
\begin{subequations}
  \label{eq:poisson:strong}
  \begin{align}
    - \Delta u &= f \text{ in } \Omega, \\
    u &= 0  \text{ on } \partial \Omega_D, \\
    \partial_n u &= g \text{ on } \partial \Omega_N.
  \end{align}
\end{subequations}
The standard variational formulation of~\eqref{eq:poisson:strong} fits
the framework of Section~\ref{sec:linear} with $\trialspace
= \testspace = H^1_{0, \partial \Omega_D}(\Omega)$ and
\begin{subequations}
  \label{eq:poisson:weak}
  \begin{align}
    a(u, v) &= \langle \Grad u, \Grad v \rangle, \\
    L(v) &= \langle f, v \rangle + \langle g, v \rangle_{\partial \Omega_N}.
  \end{align}
\end{subequations}
We consider the discretization of~\eqref{eq:poisson:weak} using the
space of continuous piecewise linear polynomials that satisfy the
essential boundary condition for $\trialspace_h = \testspace_h$.

As a test case, we consider a three-dimensional L-shaped domain,
\begin{equation*}
  \Omega =
  \left ( (-1, 1) \times (-1, 1) \setminus (-1, 0) \times (-1, 0) \right )
  \times (-1, 0),
\end{equation*}
with Dirichlet boundary $\partial \Omega_D = \{ (x, y, z) \, : \, x =
1 \text{ or } y = 1 \}$ and Neumann boundary $\partial \Omega_N =
\partial \Omega \setminus \partial \Omega_D$. Let $f(x, y, z) = - 2 (x
- 1)$ and let $g = G \cdot n$ with $G(x, y, z) = \left ( (y - 1)^2,
2(x - 1)(y - 1), 0 \right)$. The exact solution is then given by
\begin{equation}
  \label{eq:poisson:solution}
  u(x, y, z) = (x - 1) (y - 1)^2.
\end{equation}

As a goal functional, we take the average value of the solution on the
left boundary $\Gamma = \{ (x, y, z) \, : \, x = -1 \}$; that is,
\begin{equation}
  \goal(u) = \int_{\Gamma} u \ds.
\end{equation}
It follows that the exact value of the goal functional is $\goal(u) =
-2/3$.

Figure~\ref{fig:poisson} shows errors $\eta$, error estimates
$\eta_h$, the sum of the error indicators $\sum_T \eta_T$, and
efficiency indices $\eta_h/\eta$ and $\sum_T \eta_T/\eta$ for a series
of adaptively (and automatically) refined meshes. We first note that
the error estimate $\eta_h$ is very close to the error $\eta$. On the
coarsest mesh, the efficiency index is $\eta_h/\eta \approx 0.89$ and
as the mesh is refined, the efficiency index quickly approaches
$\eta_h/\eta \approx 1$. We further note that the sum of the error
indicators tends to overestimate the error, but only by a small
constant factor. This demonstrates that the automatically computed
error indicators are good indicators for refinement. We emphasize that
since the error indicators are not used as a stopping criterion for
the adaptive refinement, it is not important that they sum up to the
error.
\begin{figure}[ht]
  \begin{center}
    \subfigure[Errors]{
      \includegraphics[width=0.98\textwidth]{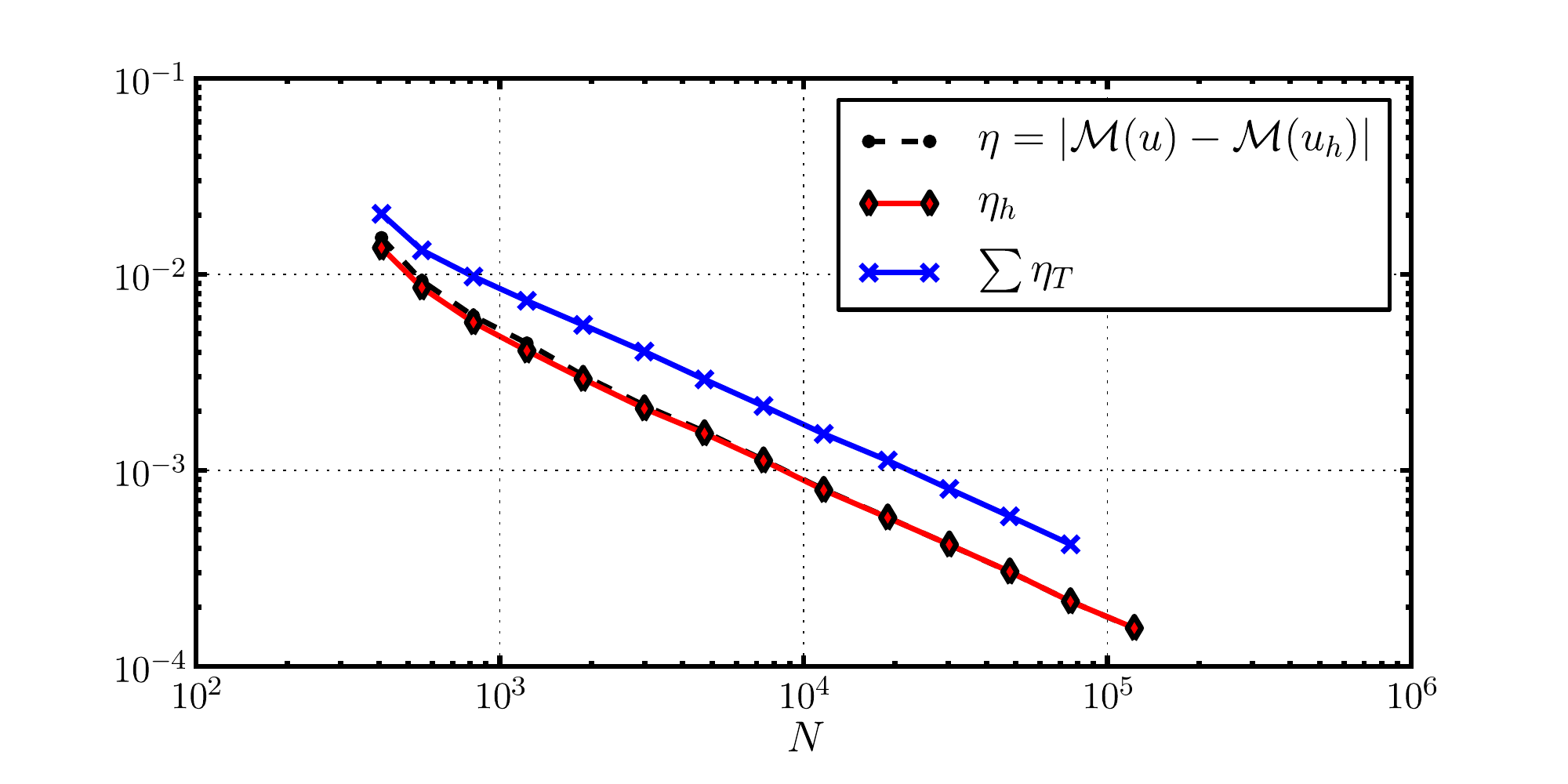}
      \label{fig:poisson:error}
    } \subfigure[Efficiency indices]{
      \includegraphics[width=0.98\textwidth]{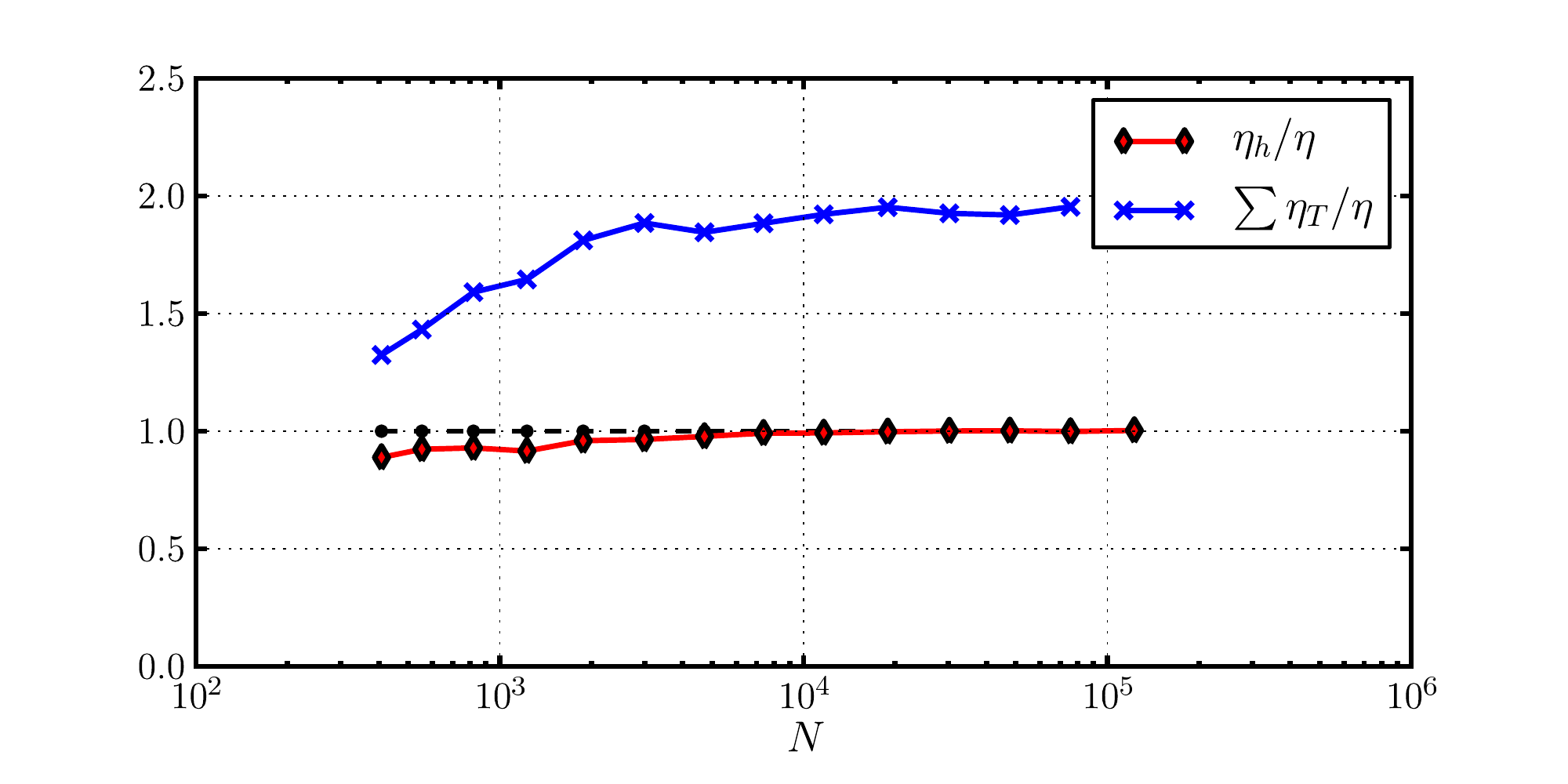}
      \label{fig:poisson:index}
    }
    \caption{Errors, error estimates, and summed error indicators
      (top) and efficiency indices (bottom) versus the number of
      degrees of freedom $N$ for adaptively refined meshes for the
      Poisson problem.  Note the excellent agreement between the error
      $\eta$ (dashed black curve) and the error estimate $\eta_h$
      (solid red curve), as well as the convergence of the efficiency
      index $\eta_h / \eta$ towards~$1$.}
\label{fig:poisson} \end{center}
\end{figure}

\subsection{Weakly symmetric linear elasticity}

As a more challenging test problem, we consider a three-field
formulation for linear isotropic elasticity enforcing the symmetry of
the stress tensor weakly. This gives rise to a mixed formulation that
involves $H(\Div)$- and $L^2$-conforming spaces. For a domain $\Omega
\subset \R^2$, the unknowns are the stress tensor $\sigma \in H(\Div,
\Omega; \mathbb{R}^{2 \times 2}) $, the displacement $u \in
L^2(\Omega; \R^{2})$, and the rotation $\gamma \in L^2(\Omega)$. The
bilinear and linear forms read
\begin{subequations} \label{eq:elasticity}
  \begin{align}
  a((\sigma, u, \gamma), (\tau, v, \eta))
  &= \inner{A \sigma}{\tau}
  + \inner{\Div \sigma}{v}
  + \inner{u}{\Div \tau}
  + \inner{\sigma}{\eta}
  + \inner{\gamma}{\tau}, \\
  L((\tau, v, \eta))
  &= \inner{g}{v}
  + \inner{u_0}{\tau \cdot n}_{\partial \Omega}.
  \end{align}
\end{subequations}
Here, $g$ is a given body force, $u_0$ is a prescribed boundary
displacement field, and $A$ is the compliance tensor. For isotropic,
homogeneous elastic materials with shear modulus $\mu$ and stiffness
$\lambda$, the action of $A$ reduces to
\begin{equation}
  \label{eq:A:isotropic}
  A \sigma = \frac{1}{2\mu} \left ( \sigma -
  \frac{\lambda}{2 (\mu + \lambda)} (\tr \, \sigma)  I \right ).
\end{equation}

We consider the discretization of these equations by a mixed finite
element space $\trialspace_h = \testspace_h$ consisting of the tensor
fields composed of two first-order Brezzi--Douglas--Marini elements
for the stress tensor, piecewise constant vector fields for the
displacement, and continuous piecewise linears for the
rotation~\cite{falk_finite_2008, farhloul_dual_1997}.

We consider the domain $\Omega = (0, 1) \times (0, 1)$ and the exact
solution $u(x, y) = (x y \sin(\pi y), 0)$ for $\mu = 1$ and $\lambda =
100$, and insert
\begin{equation*}
  g = \Div A^{-1} \varepsilon (u)
    =
  \left (
  \begin{array}{c}
    \pi \mu (2 x \cos(\pi y) - \pi x y \sin(\pi y)) \\
    \mu ( \pi y \cos(\pi y) + \sin(\pi y) )
    + \lambda (\pi y \cos(\pi y) + \sin (\pi y) )
  \end{array}
  \right ).
\end{equation*}
As a goal functional, we take a weighted measure of the average shear
stress on the right boundary,
\begin{equation*}
  \goal((\sigma, u, \gamma)) = \int_{\Gamma} \sigma \cdot n
  \cdot (\psi, 0) \ds
  \approx -0.06029761071,
\end{equation*}
where $\Gamma = \{ (x, y) \, : \, x = 1 \}$ and $\psi = y (y - 1)$.

The resulting errors, error estimates, error indicators, and
efficiency indices are plotted in Figure~\ref{fig:elasticity}. Again,
we note that the error estimate~$\eta_h$ is very close to the actual
error~$\eta$. We also note the good performance of the error
indicators that overestimate the error by around a factor of $2-4$.
This is remarkable, considering that that the error estimate and error
indicators are derived automatically for a non-trivial mixed
formulation and involve automatic extrapolation of the dual solution
from a mixed $[\mathrm{BDM}_1]^2 \times \mathrm{DG}_0 \times P_1$
space to a mixed $[\mathrm{BDM}_2]^2 \times \mathrm{DG}_1 \times P_2$
space. As far as the authors are aware, this is the first
demonstration of goal-oriented error control for this discretization
of the formulation~\eqref{eq:elasticity}.

\begin{figure}[ht]
  \begin{center}
    \subfigure[Errors]{
      \includegraphics[width=0.98\textwidth]{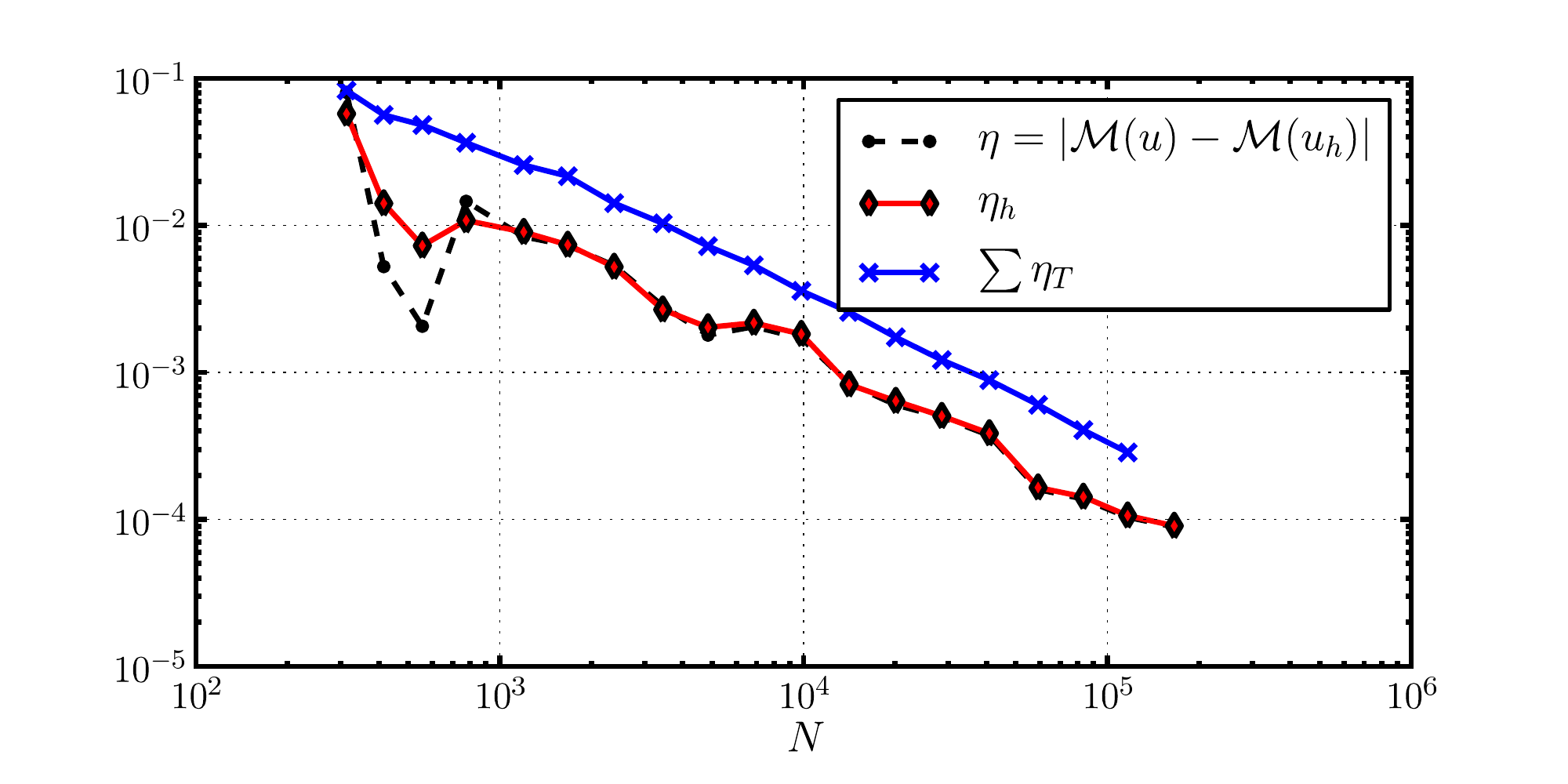}
      \label{fig:elasticity:error}
    } \subfigure[Efficiency indices]{
      \includegraphics[width=0.98\textwidth]{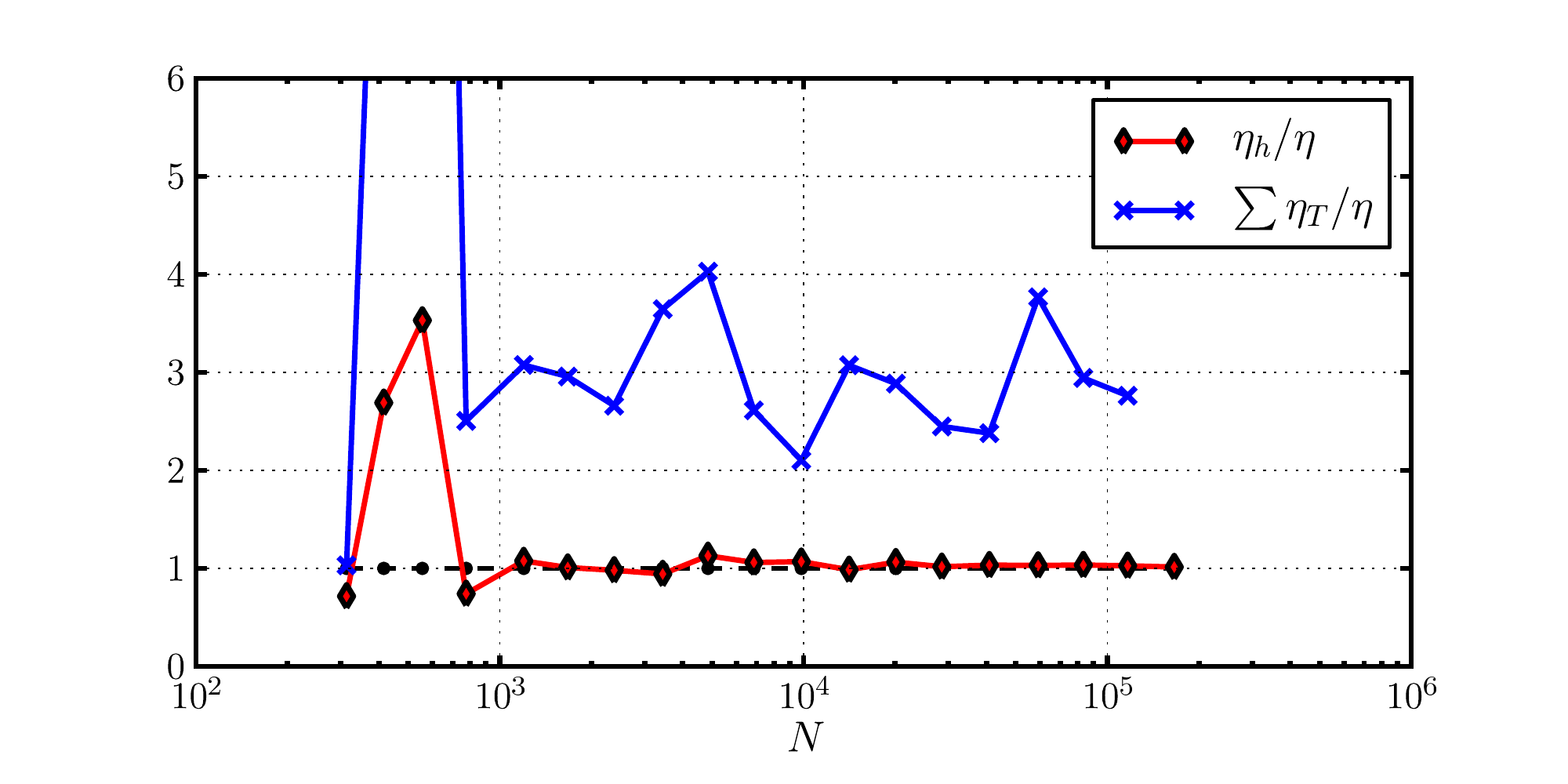}
      \label{fig:elasticity:index}
    }
    \caption{Errors, error estimates, and summed error indicators
      (top) and efficiency indices (bottom) versus the number of
      degrees of freedom $N$ for adaptively refined meshes for the
      mixed elasticity problem.  Note the excellent agreement between
      the error $\eta$ (dashed black curve) and the error estimate
      $\eta_h$ (solid red curve), as well as the convergence of the
      efficiency index $\eta_h / \eta$ towards~$1$.}
    \label{fig:elasticity}
  \end{center}
\end{figure}

\subsection{The stationary incompressible Navier--Stokes equations}

Finally, we consider a stationary pressure-driven Navier--Stokes flow
in a two-dimensional channel with an obstacle. We let $\Omega =
\Omega_C \backslash \Omega_O$, where $\Omega_C = (0, 4) \times (0, 1)$
and $\Omega_O = (1.4, 1.6) \times (0, 0.5)$. We let $\Omega_N = \{ (x,
y) \in \partial \Omega, x = 0 \text{ or } x = 4\}$ denote the Neumann
(inflow/outflow) boundary and let $\Omega_D = \Omega \setminus
\Omega_N$ denote the Dirichlet (no-slip) boundary.

We consider the following nonlinear variational problem for the
solution of the stationary incompressible Navier--Stokes equations:
find $(u, p) \in \trialspace$ such that \[F((u, p); (v, q)) = 0\] for
all $(v, q) \in \testspace$, where
\begin{equation*}
  \label{eq:navier-stokes}
  F((u, p); (v, q)) =
  \nu \inner{\Grad u}{\Grad v}
  + \inner{\Grad u \cdot u}{v}
  - \inner{p}{\Div v}
  + \inner{\Div u}{q}
  + \inner{\bar{p}n}{v}_{\partial \Omega_N}.
\end{equation*}
Here, $\bar{p}$ is a given boundary condition at the inflow/outflow
boundary.

The trial and test spaces are given by $\trialspace = \testspace =
H^1_{0, \partial \Omega_D}(\Omega; \R^2) \times L^2(\Omega)$. We let
the (kinematic) viscosity be $\nu = 0.02$ and take $\bar{p} = 1$ at $x
= 0$ (inflow) and $\bar{p} = 0$ at $x = 4$ (outflow). The quantity of
interest is the outflux at $x = 4$,
\begin{equation*}
  \goal(u, p) = \int_{x = 4} u \cdot n \ds
  \approx 0.40863917.
\end{equation*}
The system is discretized using a Taylor--Hood elements; that is, the
velocity space is discretized using continuous piecewise quadratic
vector fields and the pressure space is discretized using continuous
piecewise linears. The nonlinear system is solved using a standard
Newton iteration.

The results for this case are shown in Figure~\ref{fig:ns}. As seen in
this figure, the error estimate is not as accurate as for the two
previous test cases. The efficiency index oscillates in the range
$0.2-1.0$.  This is not surprising, considering that (i) a
linearization error is introduced when linearizing the dual problem
around the computed approximate solution $u_h$, rather than computing
the average~\eqref{eq:average}, and (ii) both the primal and dual
problems exhibit singularities at the reentrant corners making the
higher-order extrapolation procedure suboptimal for approximating the
exact dual solution. Still, we obtain reasonably good error estimates
and error indicators. Furthermore, the adaptive algorithm performs
very well when comparing the convergence obtained with the adaptively
refined sequence of meshes to that of uniform refinement,
cf.~Figure~\ref{fig:convergencevsuniform}. The final mesh is shown
in~Figure~\ref{fig:navier-stokes:mesh}, and we note that it is heavily
refined in the vicinity of the reentrant corners.
\begin{figure}[ht]
  \begin{center}
    \subfigure[Errors]{
      \includegraphics[width=0.98\textwidth]{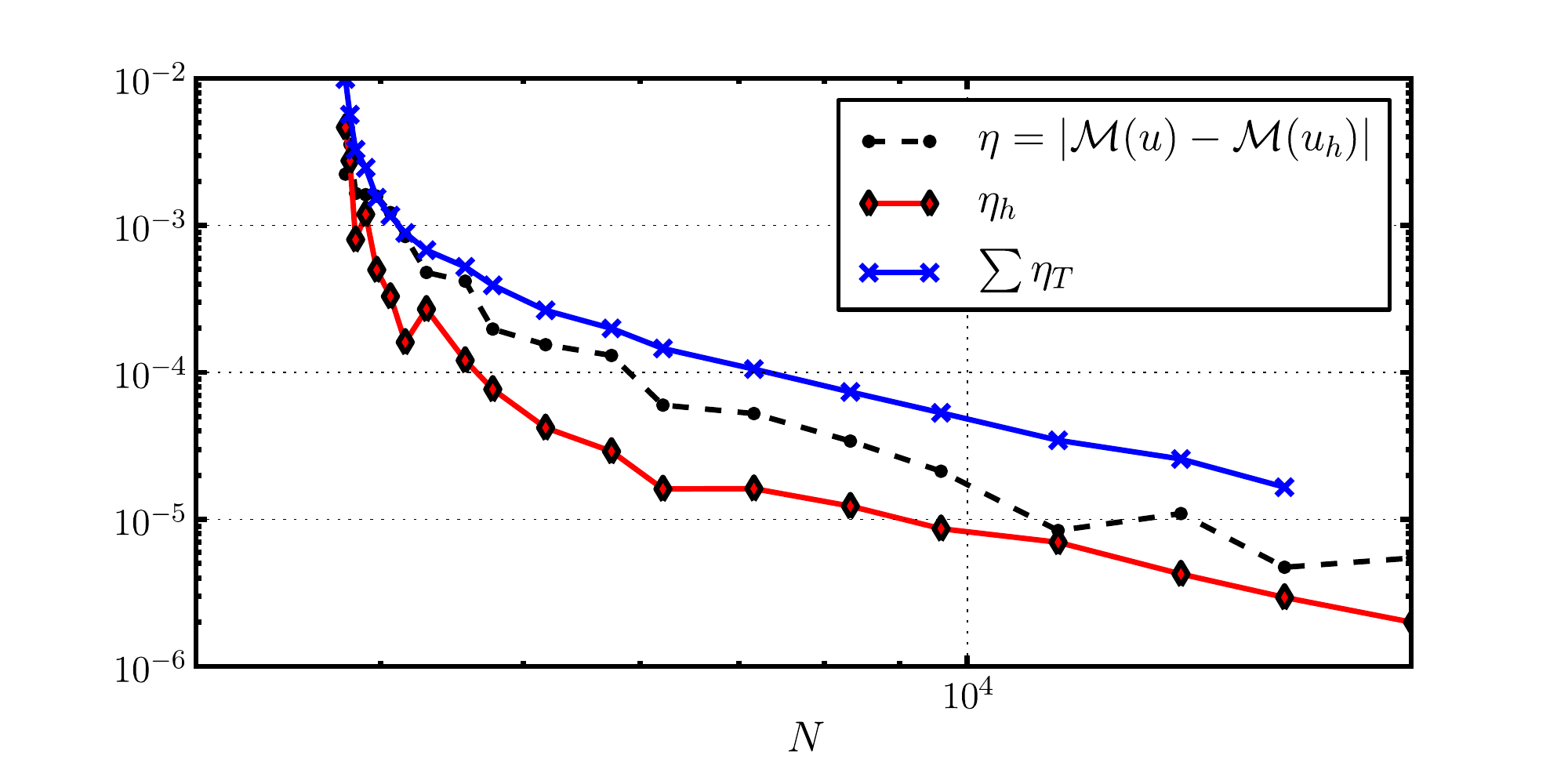}
      \label{fig:ns:error}
    } \subfigure[Efficiency indices]{
      \includegraphics[width=0.98\textwidth]{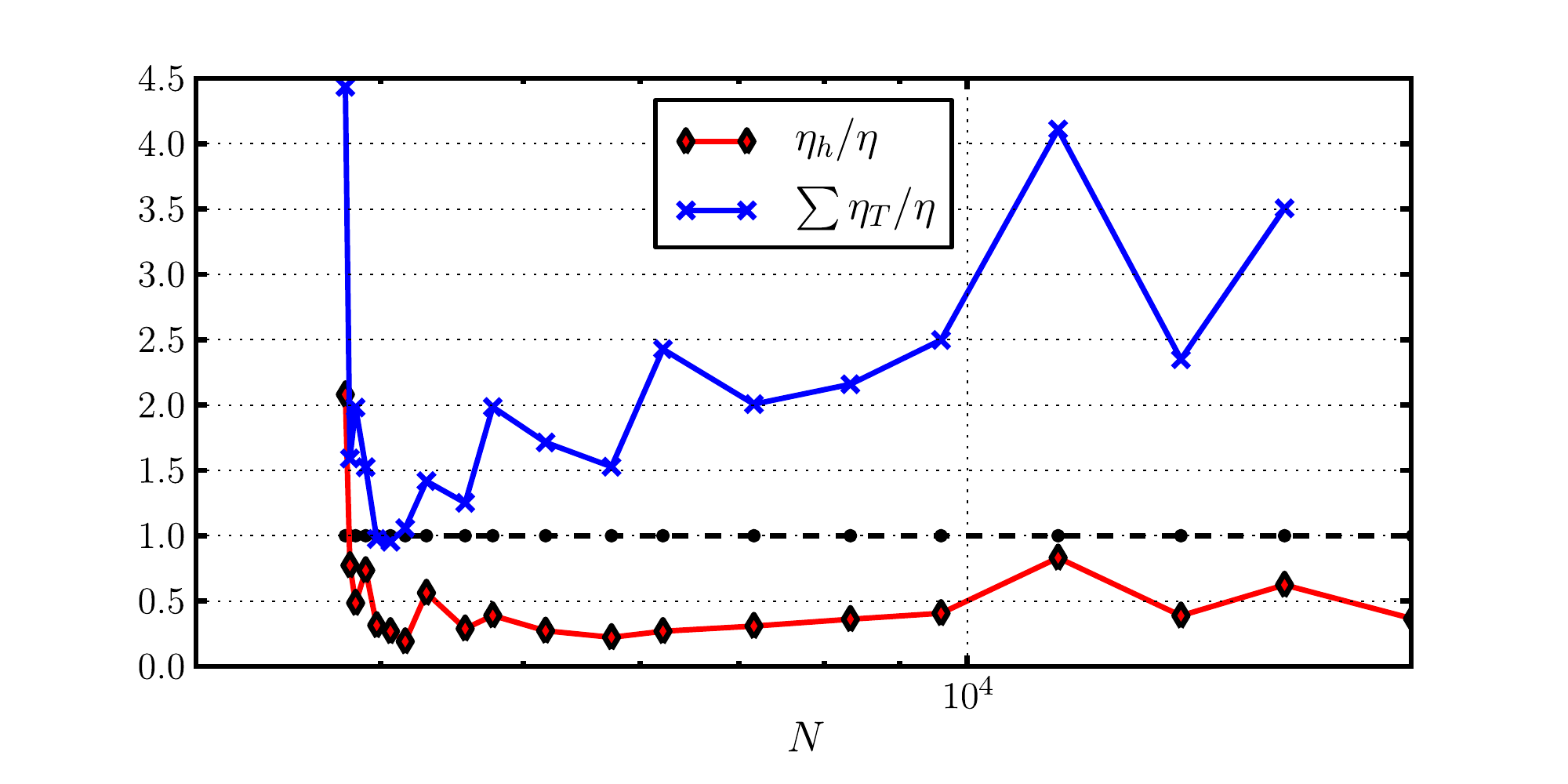}
      \label{fig:ns:index}
    }
    \caption{Errors, error estimates, and summed error indicators
      (top) and efficiency indices (bottom) versus the number of
      degrees of freedom $N$ for adaptively refined meshes for the
      Navier--Stokes problem. This is a detail of the full convergence
      plot shown in Figure~\ref{fig:convergencevsuniform}, where the
      convergence of the adaptive algorithm is contrasted to the
      convergence obtained with uniform refinement.}  \label{fig:ns}
  \end{center}
\end{figure}

\begin{figure}[ht]
  \begin{center}
    \includegraphics[width=0.98\textwidth]{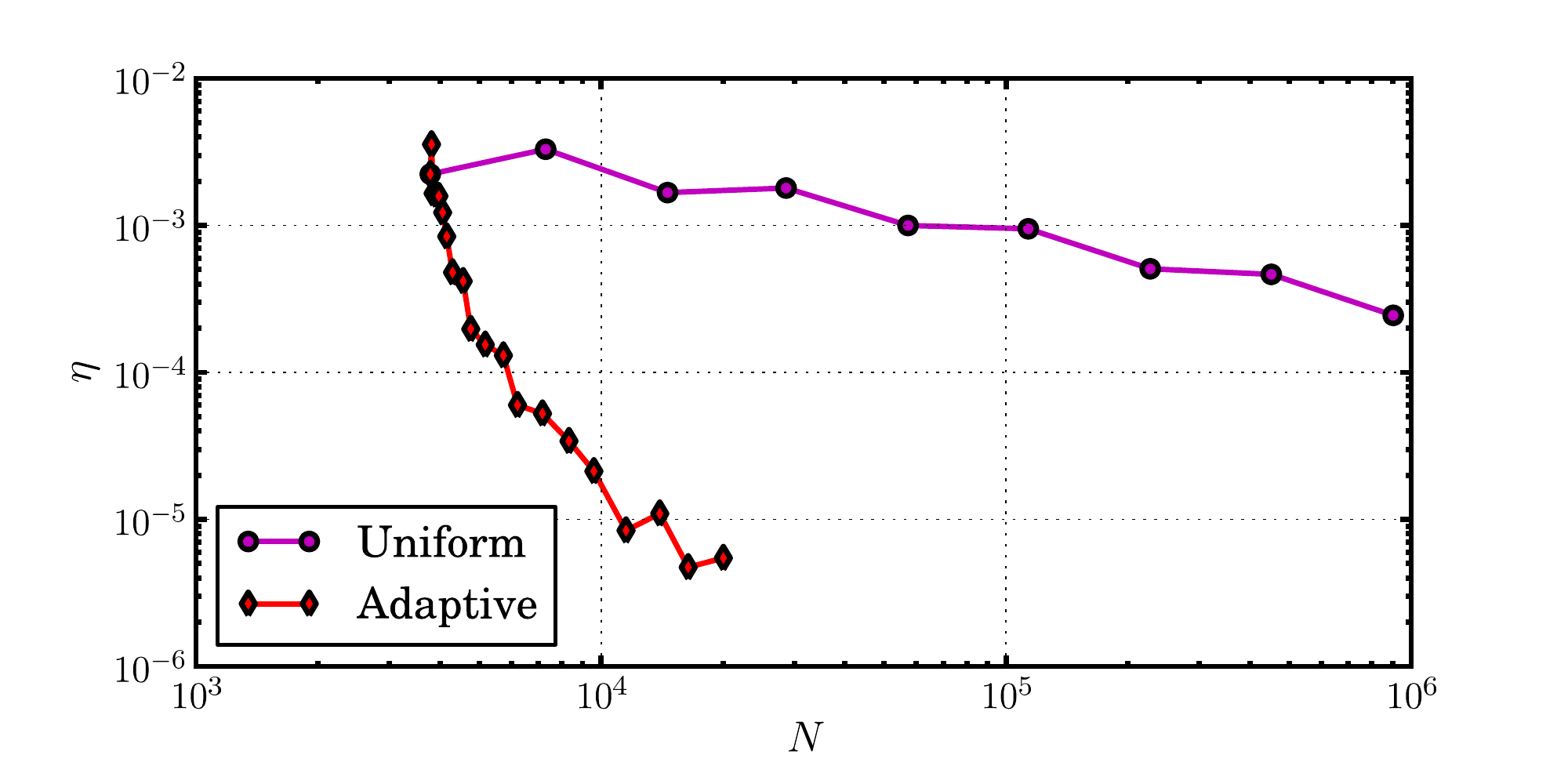}
    \caption{Convergence for adaptively and uniformly refined meshes
      for the Navier--Stokes problem. Adaptive refinement outperforms
      uniform refinement by 1--2 orders of magnitude.}
    \label{fig:convergencevsuniform}
  \end{center}
\end{figure}

\begin{figure}
  \begin{center}
    \includegraphics[width=0.9\textwidth]{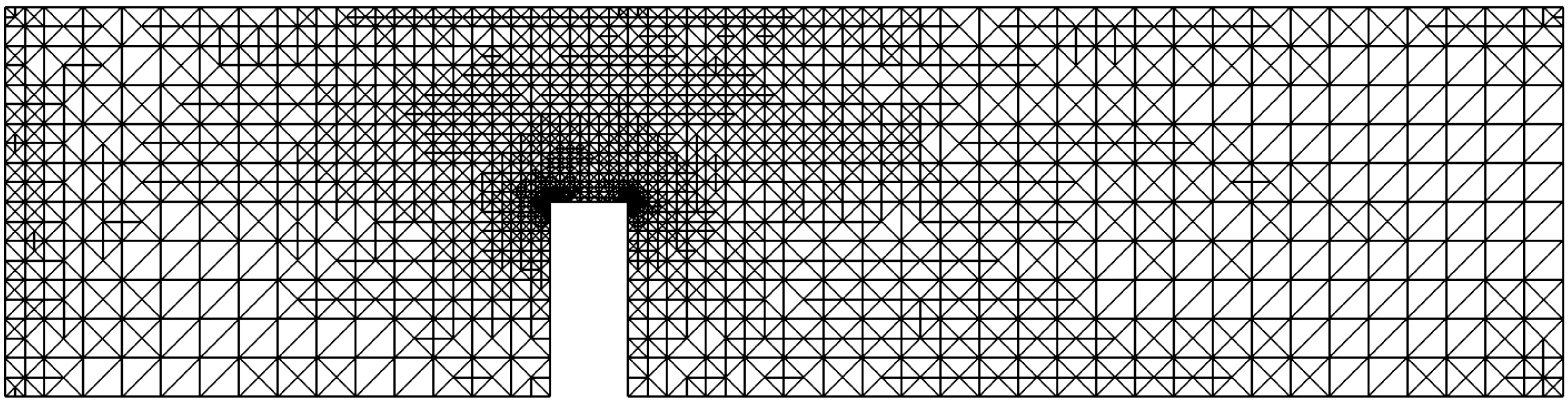}
    \caption{Final mesh for the Navier--Stokes problem.}
    \label{fig:navier-stokes:mesh}
  \end{center}
\end{figure}

\section{Conclusions}
\label{sec:conclusion}

We have demonstrated a new strategy for automated, adaptive solution
of finite element variational problems. The strategy is implemented
and freely available as part of the DOLFIN finite element
library~\cite{logg_dolfin_2009}; accessible both through the Python
and C++ interfaces.

The strategy and its implementation are currently limited to
stationary nonlinear variational problems. Another limitation is the
restriction to conforming finite element discretizations. These are
both issues that we plan to consider in future extensions of this
work. Additionally, we have assumed that the dual problem is
well-posed. This assumption may fail in cases where the primal problem
has been stabilized by the introduction of additional terms; the
adjoint (linearized) dual problem is then not necessarily well-posed.
Automated error control for such formulations is an interesting topic
for further research, but is beyond the scope of the present work.

Although the implementation has been tested on a number of model
problems with convincing results, the effect of the linearization
error (approximating $u \approx u_h$ in~\eqref{eq:average}) is
unknown. As a consequence, the computed error estimates typically
underestimate the error for nonlinear problems. The effect of the
linearization error and its proper treatment remains an open (and
fundamental) question. Also, the extrapolation algorithm proposed and
numerically tested here should be examined from a theoretical
viewpoint.

We remark that the techniques described in this paper could also be
used for norm-based error estimation. \emph{A~posteriori} error
estimates for energy or other Sobolev norms typically rely on
computing appropriately weighted norms of cell and averaged facet
residuals. Hence, the strategy described here provides a
starting-point for the automatic generation of norm-based error
estimators.

\bibliographystyle{siam}
\bibliography{bibliography}

\end{document}